\tikzset{every tree node/.style={minimum width=1em,draw,circle, inner sep=1pt},
         blank/.style={draw=none},
         edge from parent/.style=
         {draw, edge from parent path={(\tikzparentnode) -- (\tikzchildnode)}},
         level distance=1.75cm,
         sibling distance=0.2cm}
\numberwithin{algorithm}{section}
\newcommand{\BMAT}{\begin{bmatrix}}
\newcommand{\EMAT}{\end{bmatrix}}
\newcommand{\ie}{\textit{i.e.}}
\newcommand{\eg}{\textit{e.g.}}
\newcommand{\rskelf}{\texttt{rskelf}}
\newcommand{\parent}{\text{parent}}
\renewcommand{\P}{\mathscr{P}}
\newcommand{\U}{\mathscr{U}}
\newcommand{\new}[1]{\bar #1}
\newcommand{\hif}{\texttt{hif}}
\newcommand{\child}{\text{child}}
\newcommand{\rd}{\mathcal{R}}
\newcommand{\M}{\mathscr{M}}
\newcommand{\sk}{\mathcal{S}}
\newcommand{\nbor}[1]{\mathcal{N}(#1)}
\renewcommand{\L}{\mathscr{L}}
\newcommand{\edge}{\text{edge}}
\newcommand{\voro}{\mathcal{V}}
\newcommand{\skel}{Z}%\mathcal{Z}}
\newcommand{\J}{\mathcal{I}}
\newcommand{\F}{\mathcal{F}}
\newcommand{\N}{\mathcal{N}}
\renewcommand{\nbor}{\text{nbor}}
\colorlet{lightgray}{black!20}
\colorlet{darkgray}{black!50}
\title{A technique for updating hierarchical skeletonization-based factorizations\\ of integral operators}
\author{
	Victor Minden\thanks{Institute for Computational and Mathematical Engineering, Stanford University, Stanford, CA 94305. Email: \texttt{\{vminden, damle\}@stanford.edu}}\and
	Anil Damle$^*$\and%\thanks{Institute for Computational and Mathematical Engineering, Stanford University, Stanford, CA 94305. Email: \texttt{damle@stanford.edu}}\and
	Kenneth L. Ho\thanks{Department of Mathematics, Stanford University, Stanford, CA 94305. Email: \texttt{klho@stanford.edu}}\and
	Lexing Ying\thanks{Department of Mathematics and Institute for Computational and Mathematical Engineering, Stanford University, Stanford, CA 94305. Email: \texttt{lexing@math.stanford.edu}}
}
\begin{document}

\maketitle

\begin{abstract}
  We present a method for updating certain hierarchical factorizations
  for solving linear integral equations with elliptic kernels.  In
  particular, given a factorization corresponding to some initial
  geometry or material parameters, we can locally perturb the geometry
  or coefficients and update the initial factorization to reflect this
  change with asymptotic complexity that is poly-logarithmic in the
  total number of unknowns and linear in the number of perturbed
  unknowns.  We apply our method to the recursive skeletonization
  factorization and hierarchical interpolative factorization and
  demonstrate scaling results for a number of different 2D problem
  setups.
\end{abstract}

\begin{keywords}
  factorization updating, local perturbations, hierarchical
  factorizations, integral equations
\end{keywords}

\begin{AMS}
  65R20, 15A23, 65F30
\end{AMS}

\section{Introduction}
\label{sec:intro}
In engineering and the physical sciences, many fundamental problems of
interest can be expressed as an integral equation (IE) of the form
 \begin{align}\label{eq:bie}
  a(x) u(x) + b(x) \int_{\Omega} K(x,y) c(y) u(y) \, dy = f(x), \quad x \in \Omega \subset \mathbb{R}^{d},
  \end{align}
where $a(x),$ $b(x)$, and $c(x)$ are given functions typically
representing material parameters, $u(x)$ is the unknown function to be
determined, $K(x,y)$ is some integral kernel, $f(x)$ is some known
right-hand side, and the dimension $d=2$ or $3$.  Typically, $K(x,y)$
is associated with some underlying elliptic partial differential equation (\ie, it is the
Green's function or its derivative) and it thus tends to be singular
at $x=y$.

Discretizing the integral operator in \eqref{eq:bie} with $N$ degrees
of freedom (DOFs) via, \eg, the collocation, Nystr\"om, or Galerkin method reduces our problem to solving a linear system,
 \begin{align}\label{eq:linear}
Gu = f\,,
\end{align}
where the matrix $G\in\mathbb{C}^{N\times N}$ is dense and
$u\in\mathbb{C}^N$ and $f\in\mathbb{C}^N$ here are to be interpreted
as discretized versions of $u(x)$ and $f(x)$ in \eqref{eq:bie}. For a concrete example, in the case of simple piecewise-constant collocation with $\{x_j\}$ as the set of collocation points the discretization of \eqref{eq:bie} becomes
\begin{align}\label{eq:discbie}
a(x_i)u_i + b(x_i)\sum_j K_{ij}c(x_j)u_j= f(x_j),
\end{align}
which we solve for $u_i\approx u(x_i)$ with
\begin{align}
K_{ij} \approx \int_{\Omega_j}K(x_i, y)\,dy
\end{align}
where $\Omega_j$ is the local collocation subdomain of $x_j$.  In this case, the matrix $G$ in \eqref{eq:linear} has entries $G_{ij} = a(x_i)\delta_{ij} + b(x_i)K_{ij}c(x_j)$ with $\delta_{ij}$ the Kronecker delta and we see that the off-diagonal structure of $G$ is essentially dictated by the discretized kernel $K_{ij}$.

Given disjoint sets of unknowns $\J$ and $\J'$ corresponding to point sets $\{x_j\}_{j\in\J}$ and $\{x_j\}_{j\in\J'}$ that are physically separated, we assume that the corresponding off-diagonal subblocks $G(\J,\J')$ and $G(\J',\J)$ are numerically low-rank.  For example, this is well-known to be the case for elliptic kernels where $K(x,y)$ is smooth away from $x=y$.  This observation is the cornerstone of a number of fast (linear or quasi-linear time
complexity) direct algorithms for factoring $G$ and solving \eqref{eq:linear} using hierarchical spatial subdivision to
expose and take advantage of the inherent physical structure of the
underlying problem.

\subsection{Problem statement}\label{sec:localized}
In this paper, we consider a sequence of problems of the form
\eqref{eq:bie} that are related through \emph{localized perturbations}.  By a localized perturbation we mean that, given a matrix $G$ discretizing the original problem and a matrix $\new{G}$ discretizing the new problem, there is a small local subdomain $\tilde \Omega \subset \Omega$ such that for all index sets $\J$ and $\J'$ with corresponding points all \emph{not} in the modified subdomain $\tilde \Omega$, we have
\begin{align}
\new{G}(\J,\J') = G(\J,\J').
\end{align}
Put simply, blocks of the system matrix that correspond to degrees of freedom away from the modifications are unchanged.  Such local perturbations include (but are not limited to):
\begin{itemize}
\item Localized geometric perturbations (see Figure \ref{fig:rndsqr}),
  wherein the domain of integration $\Omega$ is modified and therefore a subset of discretization points of $\Omega$ may move or discretization points may be added or removed.
\item Localized coefficient perturbations, wherein the material
  parameters $a(x)$, $b(x),$ or $c(x)$ are modified in a local region.
\end{itemize}

\begin{figure}
\centering
\includegraphics[scale=0.325]{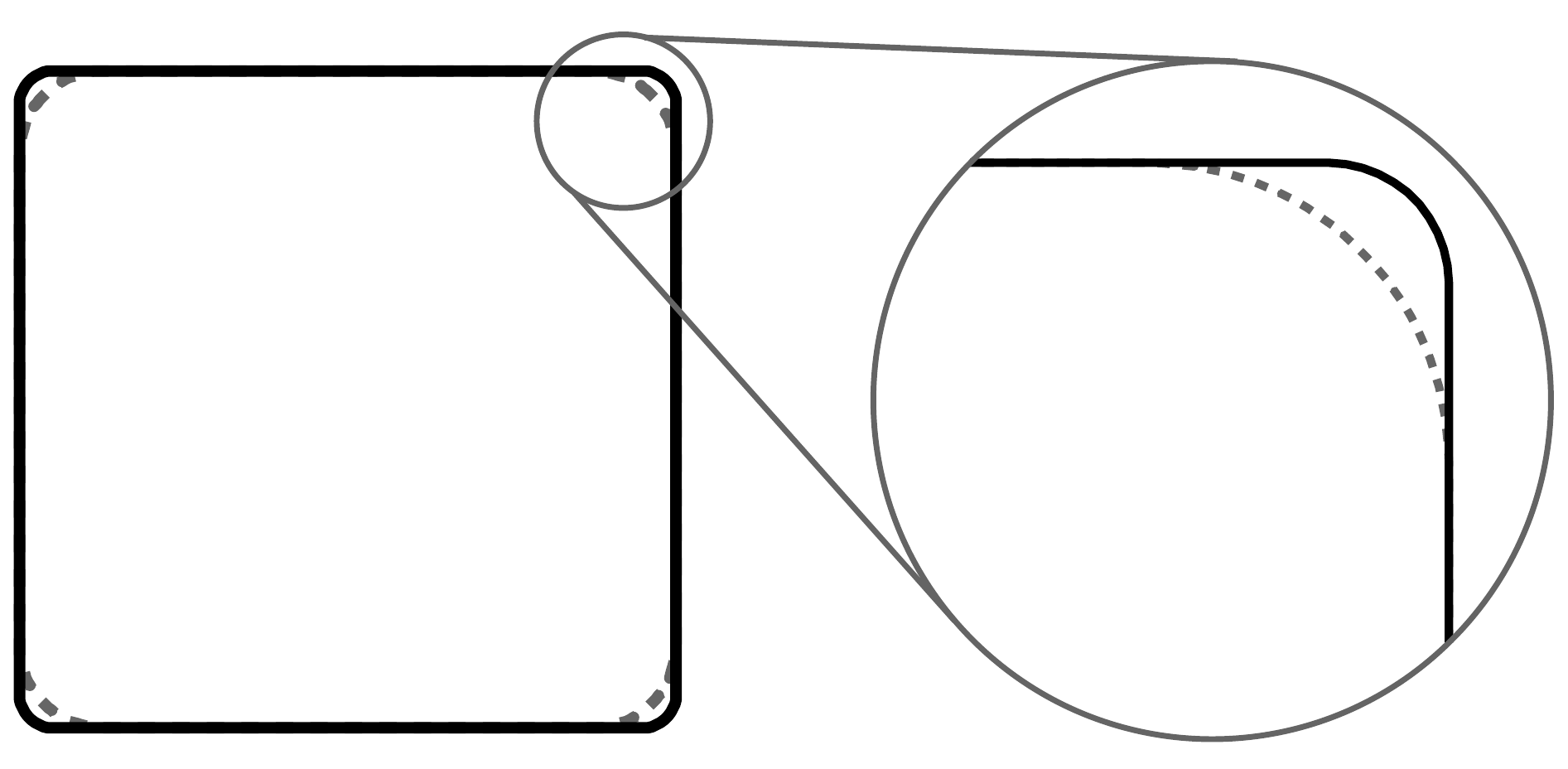}
\caption{As an example of a localized perturbation to the geometry, we start with the quasi-1D domain $\Omega=\Gamma_1$, the square with rounded corners following the dashed curve.  Then, for updating we adjust the rounding parameter to obtain $\Omega'=\Gamma_2$, the square with the sharper, solid corners. \label{fig:rndsqr} \vspace*{-.6cm}}
\end{figure}

By a sequence of localized updates, we mean that we are interested in applications where there are a number of localized perturbations
\begin{align}\label{eq:seq}
G=G^{(1)}\rightarrow G^{(2)} \rightarrow \dots \rightarrow G^{(i-1)} \rightarrow G^{(i)}\rightarrow\dots,
\end{align}
where each perturbation $G^{(i-1)}\to G^{(i)}$ is localized to some subdomain $\tilde \Omega_{i}$ that we allow to be different for each $i$.  Such sequences of problems can arise, \eg, in the
case of design problems where the physical system described by the
linear operator is a device that we want to design in an effort to optimize some
objective function.  We make the following observations:
\begin{itemize}
\item Localized perturbations lead to a global low-rank modification in the sense that entire rows and columns of the new matrix $G^{(i)}$ are different from the corresponding rows and columns in $G^{(i-1)}$, if such a correspondance even exists.
\item Because each perturbation can be localized to a different subdomain, for large $i$ the matrix $G^{(i)}$ is not necessarily given by a low-rank modification to $G$.
\end{itemize}

Because the perturbations we consider take advantage of the same physical structure used in the construction of hierarchical factorizations (\ie, spatial locality), it is not unreasonable to believe it might be possible to take a hierarchical factorization of $G^{(i-1)}$ and update it to obtain a hierarchical factorization of $G^{(i)}$.  This is what the method we describe in this work accomplishes in an efficient way for certain factorizations.

\subsection{Background}
Fast direct solvers for solving the linear systems arising from
discretized integral equations via the compression of low-rank blocks
exist in a number of different forms.  The seminal work in such
compressed representations is the $\mathcal{H}$- and
$\mathcal{H}^2$-matrices of Hackbusch \emph{et al.}
\cite{Hackbusch,HackbuschK,HackbuschB}, which provide an important
theoretical framework but in practice exhibit large constant factors
in the asymptotic scaling.

A hierarchical compression framework designed more explicitly to solve discretized elliptic integral
equations date back at least to \cite{martinsson-rokhlin} based on observations
in \cite{starr_rokhlin} and \cite{greengard-rokhlin}, and has since
been utilized and refined by a number of different authors (see, \eg,
\cite{gg,domainsAd,rskel,hifie}).  We refer to methods using this framework as
``skeletonization-based'' since at their core they employ the
interpolative decomposition for compression using the skeletonization
process described in \cite{id}.  Conceptually, these methods are
closely related to methods for systems involving so-called
hierarchically semi-separable (HSS) matrices (see, \eg,
\cite{fasthss,fastulv,xia}), and recent work has explicitly combined
the HSS and skeletonization frameworks \cite{corona2013}.  Notable
related schemes employing similar ideas include
\cite{chen,siva,bremer}.

The idea of updating matrix factorizations to solve sequences of
related systems is not a new one.  For example, in the linear
programming community it is common practice to maintain an LU
factorization of a sparse matrix $A$ that permits the addition or
deletion of rows/columns of $A$, or a general rank-one update, see \cite{gill}.  Further, it is well-known how to update the QR
factorization of a matrix after any of those same operations, see \cite{golub}.

The updating techniques described above, however, do not apply to fast
hierarchical factorizations.  Updating factorizations in the
$\mathcal{H}-$matrix format in response to local modifications has
been previously studied in \cite{djokic}, wherein a similar process to
this work is used to update the representation of the forward
operator, which allows for a post-processing step to obtain the
updated inverse in the same format.  Updating of the
skeletonization-based formats we consider here has not appeared thus
far in the literature, and, as we show, these formats admit efficient
one-pass updating.

In the case where the number of unknowns does not change and $\tilde \Omega_k$ is the same for all $k$, it is possible to order the unknowns in an LU decomposition such that those that will be modified are eliminated
last as in \cite{quintana}, which can be used to update LU factorizations for IE design problems where
only one small portion of the geometry is to be changed across all
updates.  Similarly, if the total number of unknowns modified between $G$ and $G^{(i)}$ is small and one is interested only in solving systems and not in updating factorizations, then for \emph{any} factorization of the base system $G$ it is relatively efficient to keep track of the updates as a global rank $k$ update $G^{(i)} = G + UCV$ with $U\in\mathbb{C}^{N\times k}$,
$C\in\mathbb{C}^{k\times k},$ and $V\in\mathbb{C}^{k \times N}$ and use the Sherman-Morrison-Woodbury (SMW) formula,
\begin{align}\label{eq:smw}
(G+UCV)^{-1} = G^{-1} - G^{-1}U\left(C^{-1}+VG^{-1}U\right)^{-1}VG^{-1},
\end{align}
taking advantage of the initial factorization of $G$ as is done in \cite{gg}.

\subsection{Contribution}
In this work we present a method to efficiently update skeletonization-based hierarchical factorizations in response to localized perturbations, \ie, to take a factorization corresponding to $G^{(i-1)}$ in \eqref{eq:seq} and obtain a factorization of $G^{(i)}$.  We illustrate our approach using the language of the recursive skeletonization factorization of \cite{hifde,rskel} and hierarchical interpolative factorization of \cite{hifie}, though our approach is simple to generalize to any factorization using bottom-up hierarchical compression of off-diagonal subblocks.

There are a number of advantages to our approach over using the SMW formula to solve a system with $G^{(i)}$.  In the case where the number of unknowns that have been modified between $G$ and $G^{(i)}$ is bounded by a small constant $m$ and the cost of solving a system with the existing factorization of $G$ is $\mathcal{O}(N)$, the cost of a solve using \eqref{eq:smw} (dropping terms that don't depend on $N$) is $\mathcal{O}(N + mN)$, where the second term can be amortized across multiple right-hand-sides.  However, if the number of total modified unknowns $m$ comprises any substantial fraction of $N$ then this is
not a viable strategy.

In contrast, under certain assumptions on the attainable compression of off-diagonal blocks in the factorizations considered in this paper, if the number of modified unknowns between two factorizations is bounded by $m$ then the asymptotic cost of our updating method is $\mathcal{O}(m\log^p N)$ for some small $p$.  Furthermore, one obtains a factorization of the new matrix and not just a method for solving systems.  This factorization can of course be subsequently efficiently updated, but can be useful for other reasons such as computing determinants or applying or solving with a matrix square root.

\section{Preliminaries}
\label{sec:review}

The updating ideas presented in this paper apply, in principle, to
many of the existing fast hierarchical algorithms for IEs.  For concreteness, we
present them in the context of quadtree-based generalized triangular
factorizations as presented in \cite{hifie}, in contrast to the
telescoping decompositions previously discussed
in, \eg, \cite{martinsson-rokhlin,rskel,domainsAd}. We begin by reviewing the linear algebra necessary for these factorizations to establish notation and elucidate the components of such factorizations that lead to efficient updating, though we direct the reader to \cite{hifie} for further details. For brevity, we
restrict our discussion to solving quasi-1D problems (\ie, curves in
the plane) and true 2D problems such that $\Omega \subset \mathbb{R}^2$, though the same basic process works
in 3D.  Further, our definitions and examples are given assuming collocation in which case degrees of freedom (DOFs) correspond to zero-dimensional point sets.  In the case of, for example, Galerkin discretization, where elements have nonzero spatial extent, certain definitions will need to be extended appropriately.

Recall that, given a set of DOFs corresponding to a discretization of \eqref{eq:bie}, construction of a hierarchical factorization of $G$ in \eqref{eq:linear} requires a way to expose compressible
interactions between sets of DOFs. In this work, we use a quadtree with $L$ levels, which we assume is constructed such that
leaf-level boxes each contain a number of DOFs bounded by an
occupancy parameter $n_{\text{occ}}$ independent of $N$.  In other
words, the tree is adaptive.  We note that this assumption implies
that construction of the hierarchical decomposition is a super-linear
process with complexity $\mathcal{O}(N\log N),$ but in practice
constructing the quadtree does not significantly contribute to
runtime.

In the remainder of this paper, we adopt the following notation.
For a positive integer $n$, we use $[n]$ to denote the index set
$\{1,2,\ldots,n\}$. Given a matrix $A\in\mathbb{C}^{n\times n}$, we
will use $\J\subset [n]$ and $\J' \subset [n]$ to denote disjoint sets
of DOFs, which will later on be explicitly associated with boxes in our quadtree.  For a given DOF set $\J$, we will write the complement DOF set as $\J^c = [n]\setminus \J.$  We use the MATLAB-style notation $A(\J,\J)$ to denote the diagonal subblock of $A$
corresponding to self-interactions between DOFs in $\J$ and $A(\J,\J')$ or $A(\J',\J)$ to denote off-diagonal subblocks of $A$ corresponding
to cross-interactions between the DOFs associated with the index sets
$\J$ and $\J'$.  This is in contrast to the simple subscript $A_\J$, which will be used to label a matrix $A$ that is in some way associated with $\J$.  We will use $A(:,\J)$ to refer to $A([n],\J)$ and define $A(\J,:)$ similarly.  In general, we will use upper-case variable names (\eg, $A$) to refer
to matrices and matrix-valued functions and variable names in
math-calligraphic font (\eg, $\mathcal{N}$) to refer to index sets or index-set-valued
functions (with the notable exception that $\mathcal{O}$ will be used for ``big-O'' notation), and variable names in math-script font (\eg, $\L$) to refer to collections of index sets.  When referring to specific boxes in the quadtree, we will always use the letter $b$, and similarly we will denote edges as $e$.

\subsection{Interpolative decomposition}\label{sec:id}
As discussed in Section \ref{sec:intro}, off-diagonal subblocks of $G$ corresponding to sets of DOF sets discretizing non-overlapping subdomains are assumed to be numerically low-rank, and thus fast algorithms for solving \eqref{eq:linear}
typically use some form of compression to approximate these subblocks.
One such method is the \emph{interpolative decomposition} (ID), which
we define below in a slightly non-standard fashion.

\begin{definition}\label{def:id}
  Given a matrix $A\in\mathbb{C}^{m\times |\J|}$ with columns indexed by $\J$ and a tolerance $\epsilon > 0$, an $\epsilon$-accurate interpolative decomposition of $A$ is a partitioning of $\J$ into DOF sets
  associated with so-called skeleton columns $\sk\subset \J$ and redundant columns $\rd = \J \setminus \sk$ and a corresponding interpolation matrix $\,T_\J$ such that
\begin{align*}
A(:,\rd)= A(:,\sk)T_\J +E,
\end{align*}
where $\|E\|_2 \le \epsilon\|A\|_2$.  In other words, the
redundant columns are approximated as a linear combination of the
skeleton columns to within the prescribed relative accuracy.
\end{definition}

Clearly, the ID of Definition \ref{def:id} trivially always exists by
taking $\sk = \J$.  In practice, however, we aim
to use the ID to compress $\J$ such that $|\sk|$ is close to the true $\epsilon$-numerical rank.

We do not go in detail into IDs or their computation here, but refer the reader
to \cite{id,liberty2007randomized} and references therein for a more
detailed presentation.  We do, however, note an important point in finding an ID of $A$: when constructing the ID, it is only necessary to consider nonzero rows of $A$.  This is evident from the fact that, if $\mathcal{J}$ is a set of DOFs corresponding to all the nonzero rows of $A$, then an ID of $A(\mathcal{J},:)$ yields a partitioning of $\J$ and an interpolation matrix $T_\J$ that also give a valid ID of all of $A$ to the same accuracy.

In what follows, we will use the notation
\begin{align}
\left[\sk,\, \rd,\,T_\J \right] = \texttt{id}(A,\J,\epsilon)
\end{align}
to denote functions that return the relevant pieces of an $\epsilon$-accurate ID of a matrix $A$.

\subsection{Skeletonization}
By applying the ID to numerically low-rank, off-diagonal subblocks of a matrix
$A$, we expose redundancy that can be exploited through block Gaussian
elimination to approximately sparsify $A$ via a multiplicative
procedure known as skeletonization factorization.

To begin, let $A\in\mathbb{C}^{n\times n}$ and let $\J \subset [n]$ be an index set of interest.  We compress the blocks $A(\J^c,\J)$ and $A(\J,\J^c)$ by computing an ID with tolerance $\epsilon$ to obtain
\begin{align}\label{eq:stacked}
\left[\sk,\, \rd,\,T_\J \right] = \texttt{id}\left(\left[\begin{array}{l}A(\J^c,\J)\\ A(\J,\J^c)^* \end{array} \right],\J,\epsilon\right)
\end{align}  with which we can write $A$ (up to a permutation) in block form as
\begin{align}
A  &=\left[ \begin{array}{rl|l}
A(\J^c,\J^c) & A(\J^c,\sk) & A(\J^c,\rd) \\
A(\sk,\J^c) & A(\sk,\sk) & A(\sk,\rd) \\ \hline
A(\rd,\J^c) & A(\rd,\sk) & A(\rd,\rd)
\end{array}\right] \\
&\approx
 \left[ \begin{array}{rl|l}
A(\J^c,\J^c) & A(\J^c,\sk) & A(\J^c,\sk)T_\J \\
A(\sk,\J^c) & A(\sk,\sk) & A(\sk,\rd) \\ \hline
T_\J^*A(\sk,\J^c) & A(\rd,\sk) & A(\rd,\rd)
\end{array}\right],
\end{align}
 up to relative error $\mathcal{O}(\epsilon).$  By a sequence of block row and column operations, we can eliminate the top-right and bottom-left blocks to obtain
\begin{align}\label{eq:Dblocks}
Q_\J^*AQ_\J &\approx
 \left[ \begin{array}{rl|l}
A(\J^c,\J^c) & A(\J^c,\sk) &\\
A(\sk,\J^c) & A(\sk,\sk) & D_{\sk,\rd} \\ \hline
& D_{\rd,\sk} & D_{\rd,\rd}
\end{array}\right],
\end{align}
where $Q_\J$ is given by
\begin{align}\label{eq:q}
Q_\J&= \left[\begin{array}{cc|c}I &  &  \\
                              & I &-T_\J  \\[0.125em] \hline
                              &  & I\end{array} \right]
\end{align}
and the $D$ subblocks are linear combinations of the $A$ subblocks.  We use these matrices to define the skeletonization factorization of $A$ via a final step of elimination of the DOFs $\rd$ as in \cite{hifie}.

\begin{definition}\label{def:bge}
Assume that $D_{\rd,\rd}$ in \eqref{eq:Dblocks} is nonsingular.  We define the skeletonization factorization of $A$ with respect to the DOFs $\J$ (up to a permutation) as
\begin{align*}
Z_\J(A) &\equiv  \left[ \begin{array}{rl|l}
A(\J^c,\J^c) & A(\J^c,\sk) &\\
A(\sk,\J^c) & D_{\sk,\sk} &  \\ \hline
&  & D_{\rd,\rd}
\end{array}\right]\approx M_\J^*Q_\J^* AQ_\J H_\J \equiv U_\J^*AV_\J,
\end{align*}
where the Schur complement block is $D_{\sk,\sk} = A_{\sk,\sk} - D_{\sk,\rd}D_{\rd,\rd}^{-1}D_{\rd,\sk}$, the matrices $M_\J^*$ and $H_\J$ are given by
\begin{align*}
M_\J^* \equiv \left[\begin{array}{cc|c}I &  &  \\
                              & I & -D_{\sk,\rd}D_{\rd,\rd}^{-1} \\[0.25em] \hline
                              &  & I\end{array} \right], \hspace{0.5cm} H_\J \equiv \left[\begin{array}{cc|c}I &  &  \\
                              & I &  \\ \hline &&\\[-1.0em]
                              & -D_{\rd,\rd}^{-1}D_{\rd,\sk}& I\end{array} \right],
\end{align*}
and the remaining matrices are as defined in \eqref{eq:Dblocks} and \eqref{eq:q}.
\end{definition}

We note that in $Z_\J(A)$ the redundant DOFs $\rd$ have been
completely decoupled from the rest while leaving the off-diagonal
interactions $A_{\J^c,\sk}$ and $A_{\sk,\J^c}$ unchanged.  Thus, after
this skeletonization process we will refer to the skeleton DOFs
$\sk$ as \emph{active} and the redundant DOFs $\rd$
as \emph{inactive}.  Henceforth, we refer to this skeletonization
factorization as ``skeletonization'', though this is not to be
confused with the sense in which the term is used in \cite{id}.

Clearly the matrices $U_\J$ and $V_\J$ are highly structured since they are each the product of block unit-triangular matrices.  As such, we will write the skeletonization of a matrix $A$ with respect to the DOFs $\J$ with accuracy $\epsilon$ as
\begin{align}
\left[\sk, \,\rd,\,D_{\sk,\sk},\, D_{\rd,\rd},\, U_\J,\, V_\J \right] = \texttt{skel}(A,\J,\epsilon),
\end{align}
where $U_\J$ and $V_\J$ are understood to be stored as a product of operators in block form that can be applied and inverted cheaply and clearly one can construct $Z_\J(A)$ implicitly from the information returned by $\texttt{skel}(A,\J,\epsilon)$.

\subsection{Group skeletonization}\label{sec:group}

Notationally, it will be useful as in \cite{hifie} to extend the notion of skeletonization of a matrix $A$ with respect to an index set $\J$ to skeletonization of $A$ with respect to multiple disjoint index sets.  In particular, for two index sets $\J$ and $\J'$ with $\J\cap\J' = \emptyset$ we can perform the independent skeletonizations
\begin{align}
\left[\sk,\, \rd,D_{\sk,\sk},\, D_{\rd,\rd},\, U_\J,\, V_\J \right] &= \texttt{skel}(A,\J,\epsilon)\\
\left[\sk',\, \rd',D_{\sk',\sk'},\, D_{\rd',\rd'},\, U_{\J'},\, V_{\J'} \right] &= \texttt{skel}(A,\J',\epsilon),
\end{align}
whereupon we observe that the matrices $U_{\J}$ and $U_{\J'}$ (and similarly $V_{\J}$ and $V_{\J'}$) commute, which motivates us to define the group skeletonization of $A$ with respect to the index sets $\J$ and $\J'$ as
\begin{align}
Z_{\{\J,\J'\}}(A) \approx U_{\J'}^*U_\J^*AV_\J V_{\J'} = U_\J^*U_{\J'}^*AV_{\J'} V_\J,
\end{align}
where we understand the approximation to be in the same sense as in Definition \ref{def:bge}, \ie, the remainder matrix $E$ from each ID is assumed to be zero such that the off-diagonal blocks of $A$ indexed by $\rd$ can be exactly eliminated by blocks indexed by $\sk$ (and likewise for $\rd'$ and $\sk'$).  By construction, the remainder error matrix is small and therefore ignored.

More generally, given a pairwise-disjoint collection of index sets $\mathscr{C} = \{\J_1,\dots,\J_m \}$ with each $\J_i \subset [n]$, we similarly define the simultaneous group skeletonization of $A$ with respect to $\mathscr{C}$ as
\begin{align}\label{eq:groupskel}
  Z_\mathscr{C}(A) \approx U_\mathscr{C}^*AV_\mathscr{C} = \left(\prod_{\J\in \mathscr{C}}U_\J^*\right)A\left(\prod_{\J\in \mathscr{C}}V_\J\right) ,
\end{align}
with $U_\mathscr{C}$ and $V_\mathscr{C}$
understood to be stored as relevant blocks of their constituent matrices.

\subsection{Acceleration using equivalent interactions}\label{sec:proxy}
In principle, the $D$ blocks in \eqref{eq:Dblocks} and Definition $\ref{def:bge}$ depend on all of $A(:,\J)$ and $A(\J,:)$ due to the fact that they depend on the partitioning and interpolation matrix coming from the ID of off-diagonal blocks of $A$.  In practice, however, we are not considering a general matrix $A$ but rather the explicit matrix $G$ in \eqref{eq:linear} coming from the discretization of the integral of some elliptic kernel.  Since such kernels frequently satisfy some form of Green's theorem wherein the
values of the kernel inside a domain can be recovered from
those on the boundary, a key trick for
reducing algorithmic complexity and increasing locality that is common
in the literature is the use of an equivalent \emph{proxy
surface}, see, \eg, \cite{id,corona2013,domainsAd,gg,rskel,martinsson-rokhlin,pan,kifmm,hifie}.

Let $\J_b$ correspond to the DOF set associated with a single leaf box $b$ at level $L$ of our quadtree, such that the complement DOF set $\J_b^c = [n]\setminus \J_b$ contains DOFs corresponding to all other leaf boxes.  As seen in Figure \ref{fig:proxycircle}, we can draw a smooth proxy surface $\Gamma^\text{prox}_{\J_b}$ around $b$ such that only the leaf boxes immediately adjacent to $b$ in the quadtree intersect the interior of the proxy surface.  By choosing a small number of points to discretize $\Gamma^\text{prox}_{\J_b}$, we can write down the matrix $P_{\J_b}$ corresponding to discretized kernel interactions between $\J_b$ and the proxy points.
Then, letting $\N_b$ refer to the collection of DOFs in leaf boxes adjacent to $b$ and $\F_b = \J_b^c\setminus \N_b$, we know that in the continuous limit, there exists a bounded linear operator $W$ such that can write (up to a permutation)
\begin{align}
G(\J_b^c,\J_b) &= \left[\begin{array}{l} G(\N_b,\J_b) \\ G(\F_b,\J_b) \end{array} \right] \approx \left[\begin{array}{cc}I & \\ & W\end{array} \right]\left[\begin{array}{c} G(\N_b,\J_b) \\ P_{\J_b} \end{array} \right].
\end{align}
 With this
representation, we can now perform an ID of simply the right-most
matrix above and use this to obtain an ID of $G(\J_b^c,\J_b)$.  This is desirable for two reasons.  First, since the interpolation matrix $W$ potentially has many more rows than
columns, computing an ID of this surrogate matrix can be much less
computationally expensive. Second, we see that performing an ID with respect to $\J_b$ is now dependent only on DOFs in the boxes immediately adjacent to $b$, which increases locality and will be essential for our fast updating algorithm.  For a more thorough treatment of the proxy surface, see \cite{hifie}.

\begin{figure}
\centering
\begin{subfigure}[c]{0.4\textwidth}
\centering
% Proxy surface
\scalebox{0.65}{
  \begin{tikzpicture}
  \foreach \x in {2,...,4}
      \foreach \y in {2,...,4}
          \filldraw[lightgray] (\x,\y) rectangle (\x+1, \y+1);

  \filldraw[darkgray,thick,draw=black] (3,3) rectangle (4,4);
  \draw[step=1cm,black,dashed,thick] (0.5,0.5) grid (6.5,6.5);
  \draw[black,dotted, very thick]  (3.5, 3.5) circle(1.5);
  \draw[black, thin]  (3.5, 3.5) circle(1.5);
  \end{tikzpicture}
  }
  \caption{\label{fig:proxycircle}\vspace*{-.4cm}}
  \end{subfigure}
  \
  \begin{subfigure}[c]{0.4\textwidth}
  \centering
  \scalebox{0.65}{
      \begin{tikzpicture}
      \draw[step=1cm,black,dashed,thick,shift={(-3.5,0.5)}] (0.5,0.5) grid (6.5,6.5);
            \draw[draw=none, use as bounding box, clip,shift={(-3.5,0.5)}](1,1) rectangle (6,6);
      \draw[step=1cm,black,dashed,thick,shift={(-3.5,0.5)}] (0.5,0.5) grid (6.5,6.5);
      \draw[step=0.707cm, thick,darkgray,rotate=45] (-0.5,-0.5) grid (6.5,6.5);
 \end{tikzpicture}
 }
 \caption{\label{fig:voro}\vspace*{-.4cm}}
  \end{subfigure}
  \caption{(a): Using the dotted black circle as a proxy surface when skeletonizing DOFs in the dark gray box, only interactions between that box and the light gray boxes adjacent to it need be considered.  In particular, all interactions between the dark gray box and the white boxes will be represented by equivalent interactions using the proxy surface.   (b): If the dashed black grid corresponds to edges of boxes at level $\ell$, then the assignment of DOFs to edges is given using the Voronoi tessellation about the edge centers (gray rotated grid).\vspace*{-.6cm}}
\end{figure}
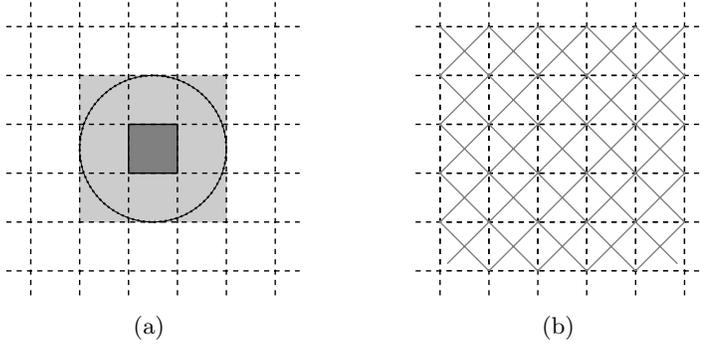
\section{Factorization algorithms}\label{sec:factor}
Here we describe the recursive skeletonization factorization and hierarchical interpolative factorization detailed in \cite{hifie} in a manner conducive to discussing efficient updating.  We remark that our notation here differs in several ways from the previous presentation.
\subsection{Recursive skeletonization factorization}
With our notion of group skeletonization using the proxy trick described in Section \ref{sec:proxy}, we construct the recursive skeletonization
factorization of \cite{hifie} described in Algorithm \ref{alg:rskelf},
which is an alternative approach to the hierarchical compression
scheme in \cite{domainsAd, martinsson-rokhlin,rskel} for
solving \eqref{eq:linear}.  The resulting multiplicative factorization
closely resembles a variant of the so-called ULV
decomposition in \cite{fastulv} while taking advantage of intermediate
reduced representations of matrix blocks such as in \cite{xia2} to
attain better complexity.

As previously stated, we hierarchically decompose our domain $\Omega$ using a quadtree with root level $\ell=0$ and lowest level $\ell=L$.  As in Section \ref{sec:proxy}, we will consider the DOFs corresponding to a leaf box $b$, $\J_b$, to be the set of DOFs corresponding to discretization points interior to $b$.  Then, we define the collection of DOF sets for level $L$ as
\begin{align}\label{eq:ell}
\L_L = \{ \J_b \, \vert\, b \text{ is a box at level $L$} \}.
\end{align}

Using the group skeletonization process described in Section \ref{sec:group} to skeletonize $G$ in \eqref{eq:linear} with respect to the collection $\L_L$ with tolerance $\epsilon$ yields the sparsified matrix $Z_{\L_L}(G)$.  We functionally write this group skeletonization as
\begin{align}
\left[\left\{\sk_b,\, \rd_b,\, D_{\sk_b,\sk_b},\, D_{\rd_b,\rd_b}\right\}_{\J_b \in \L_L},\,U_{\L_L},\, V_{\L_L} \right] = \texttt{skel}(G,\L_L,\epsilon).
\end{align}

After group skeletonization at level $L$, we move to level $\ell=L-1$.  For a leaf box $b$, we define the DOFs $\J_b$ as before, but now we also have some boxes $b$ at level $L-1$ that are not leaves.  For such boxes $b$, we will define the child set $\child(b)$ to be the set of child boxes of $b$ in the quadtree and define the DOFs associated with $b$ as the set of active DOFs corresponding to children of $b$, \ie,
\begin{align}\label{eq:activedofs}
\J_b &= \bigcup_{b' \in \child(b)} \sk_{b'}.
\end{align}
With this definition, we can define $\L_\ell$ for any level $\ell < L$ analogously to \eqref{eq:ell} and again skeletonize with respect to $\L_\ell$ to further eliminate DOFs.  Repeating this process level by level constitutes the recursive skeletonization factorization, \rskelf{}, as made concrete in Algorithm \ref{alg:rskelf}.

 \begin{algorithm}
 \small
  \caption{Recursive skeletonization factorization (\rskelf{})}
  \label{alg:rskelf}
  \begin{algorithmic}
   \State $G_L = G$
   \For{$\ell = L, L-1, \dots, 1$}
   \State{\texttt{// get skeleton blocks and operators}}
   \State $\left[\left\{\sk_b,\, \rd_b,\, D_{\sk_b,\sk_b},\, D_{\rd_b,\rd_b}\right\}_{\J_b \in \L_\ell},\,U_{\L_\ell},\, V_{\L_\ell}\right] = \texttt{skel}(G_\ell,\L_\ell,\epsilon)$
   \State{\texttt{// assemble skeletonization}}
   \State $G_{\ell-1} = G_\ell$
    \For{$\J_b \in \L_\ell$ with $\J_b=\sk_b\cup\rd_b$}
    \State $G_{\ell-1}(:,\rd_b) = G_{\ell-1}(\rd_b,:) = 0$
    \State $G_{\ell-1}(\sk_b,\sk_b) = D_{\sk_b,\sk_b}$
    \State $G_{\ell-1}(\rd_b,\rd_b) = D_{\rd_b,\rd_b}$
    \EndFor
   \EndFor
   \State $G \approx F \equiv U_{\L_L}^{-*} \cdots U_{\L_1}^{-*} G_0 V_{\L_1}^{-1} \cdots V_{\L_L}^{-1}$
  \end{algorithmic}
 \end{algorithm}

As before, we note that the large matrices $U_{\L_\ell}$ and $V_{\L_\ell}$ for each level are purely notational and are stored in block form.  In fact, even explicit assembly of $G_{\ell-1}$ is not strictly necessary but written purely for exposition.

At each level $\ell$ in Algorithm \ref{alg:rskelf}, we identify for each box $b$ a set of redundant DOFs $\rd_b$ which are completely decoupled from the rest of the DOFs as evidenced by the zero blocks introduced into $G_{\ell-1}$.  Therefore, as observed in Section \ref{sec:id}, it is not necessary to consider the redundant DOFs from any level $\ell' > \ell$ when skeletonizing level $\ell$.  Further, the use of the proxy trick described in Section \ref{sec:proxy} implies that when skeletonizing box $b$ we need to consider only the set of neighboring active DOFs
\begin{align}\label{eq:nbor}
\N_b &= \bigcup_{b' \in \nbor(b)} \J_{b'},
\end{align}
where $\nbor(b)$ is the function that maps a box to the collection of adjacent boxes that are either also on level $\ell$ or are on level $\ell' < \ell$ and have no children.  This second criterion serves to address the case of heterogeneous tree refinement.  With the use of the proxy surface, skeletonization requires only local matrix operations with cost $\mathcal{O}(|\J_b|^3)$.  This combined with \eqref{eq:activedofs} shows that the cost of \rskelf{} depends strongly on the scaling of $|\sk_b|$ across all boxes with respect to $N$.

We let $|s_\ell|$ refer to the average number of skeleton DOFs per box at level $\ell$, \ie,
 \begin{align}
 |s_\ell| = \frac{1}{|\L_\ell|}\sum_{\substack{\J_b\in\L_\ell\\ \J_b = \sk_b\cup\rd_b}} | \sk_b|,
 \end{align}
 and note that $|s_0|=0$ by this definition. We see in Figure \ref{fig:rskelf} that the skeleton DOFs tend to cluster near the boundaries of the quadtree boxes, such that for the quasi-1D problem at the top of the figure we have $|s_1|$ essentially independent of $N$ for elliptic kernels as documented in \cite{rskel,hifie}, leading to an asymptotic cost of $\mathcal{O}(N)$ for construction of $G$.  However, as seen in the bottom of the same figure, in true 2D problems the clustering of DOFs near box boundaries for the same kernel results in $|s_1|$ scaling as $\mathcal{O}(N^{1/2}),$ making factorization with \rskelf{} asymptotically more expensive and thus necessitating modifications as described in Section \ref{ref:hif}.

 \begin{figure}
  \centering
  \begin{subfigure}{0.2\textwidth}
  \centering
   \includegraphics[width=0.7\textwidth]{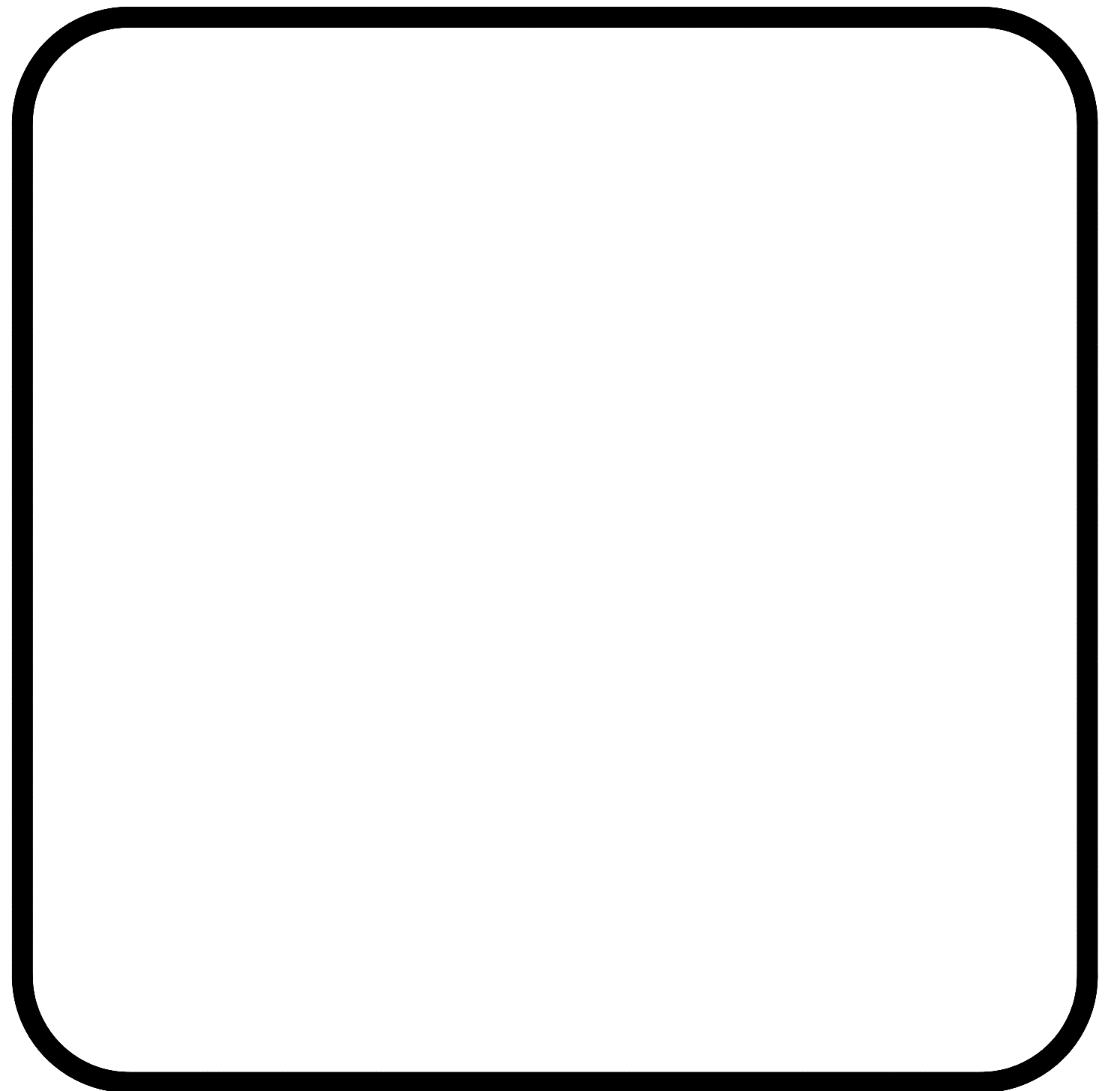}
   \caption*{$\ell = 3$}
  \end{subfigure}
  \qquad
  \begin{subfigure}{0.2\textwidth}
  \centering
   \includegraphics[width=0.7\textwidth]{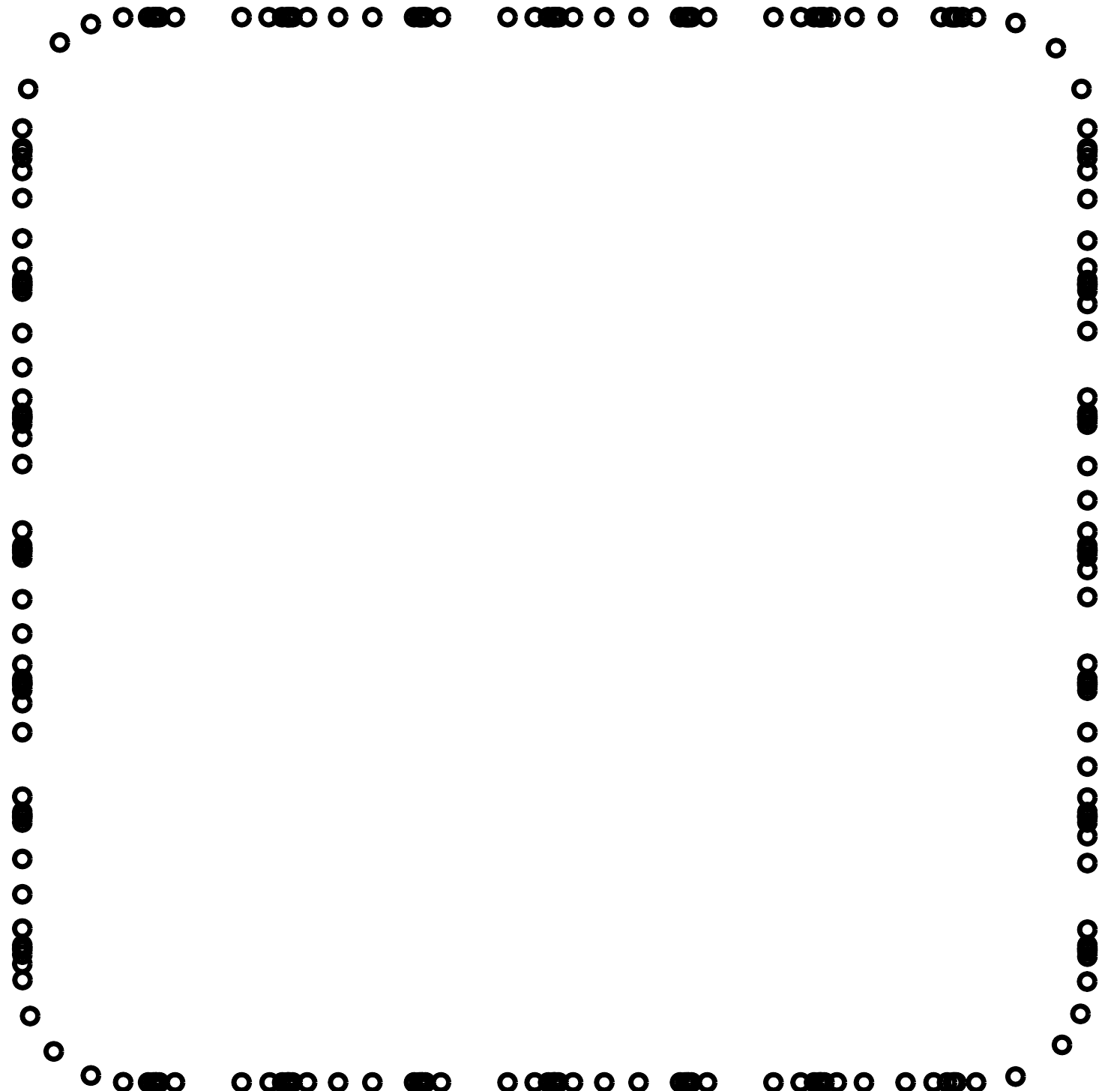}
   \caption*{$\ell = 2$}
  \end{subfigure}
  \qquad
  \begin{subfigure}{0.2\textwidth}
  \centering
   \includegraphics[width=0.7\textwidth]{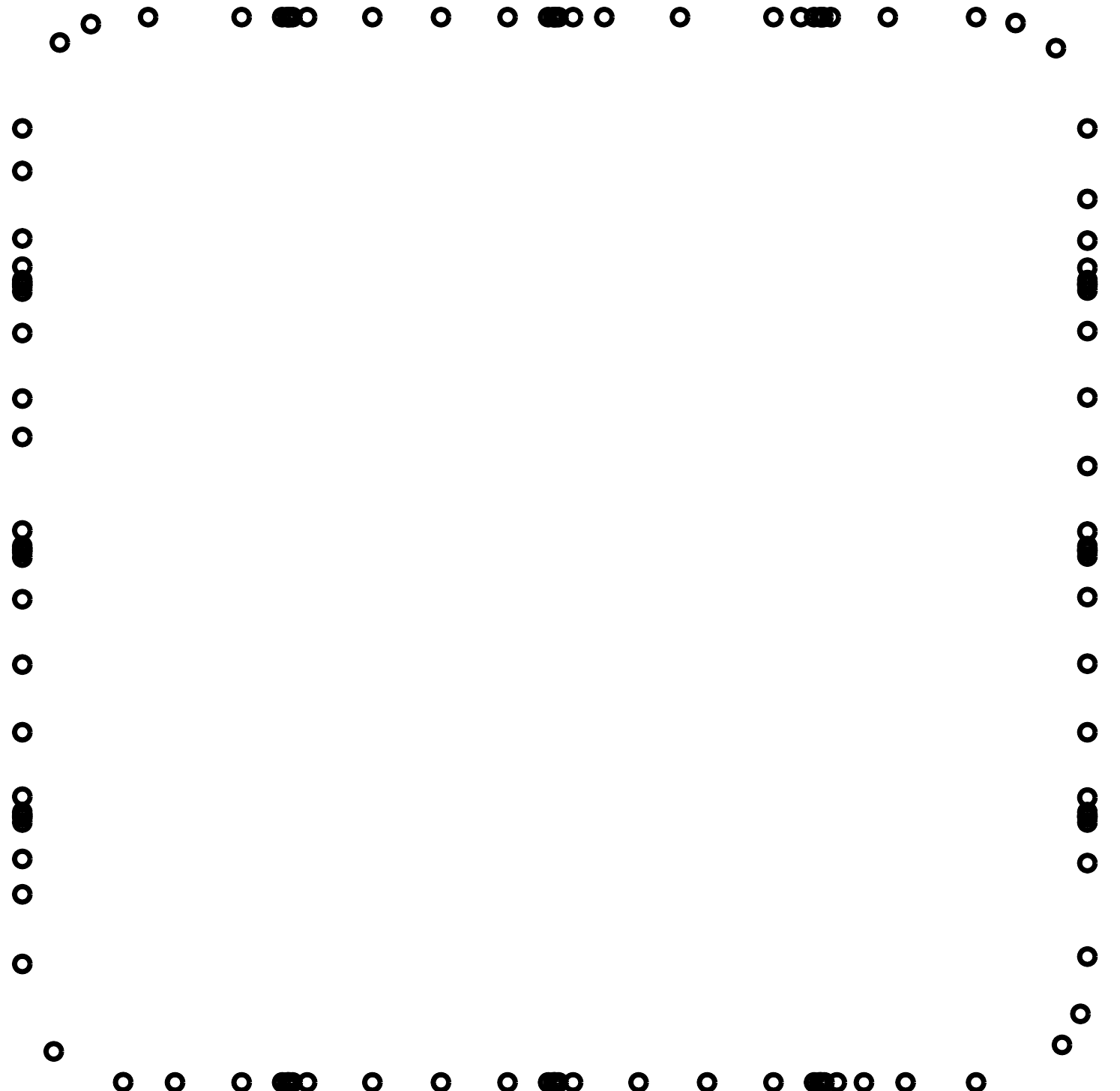}
   \caption*{$\ell = 1$}
  \end{subfigure}
  \qquad
  \begin{subfigure}{0.2\textwidth}
  \centering
   \includegraphics[width=0.7\textwidth]{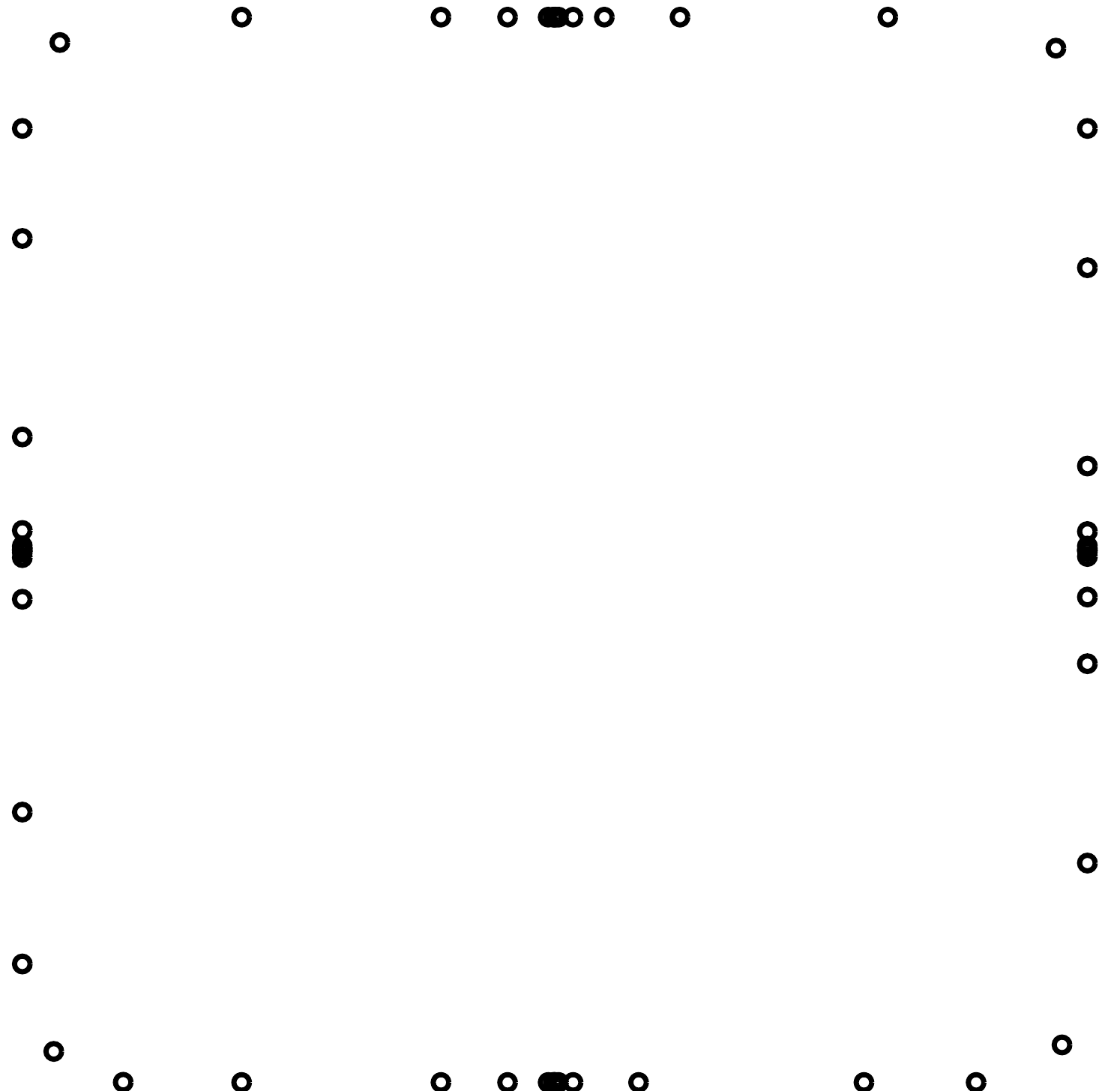}
   \caption*{$\ell = 0$}
  \end{subfigure}

    \begin{subfigure}{0.2\textwidth}
    \centering
   \includegraphics[width=0.7\textwidth]{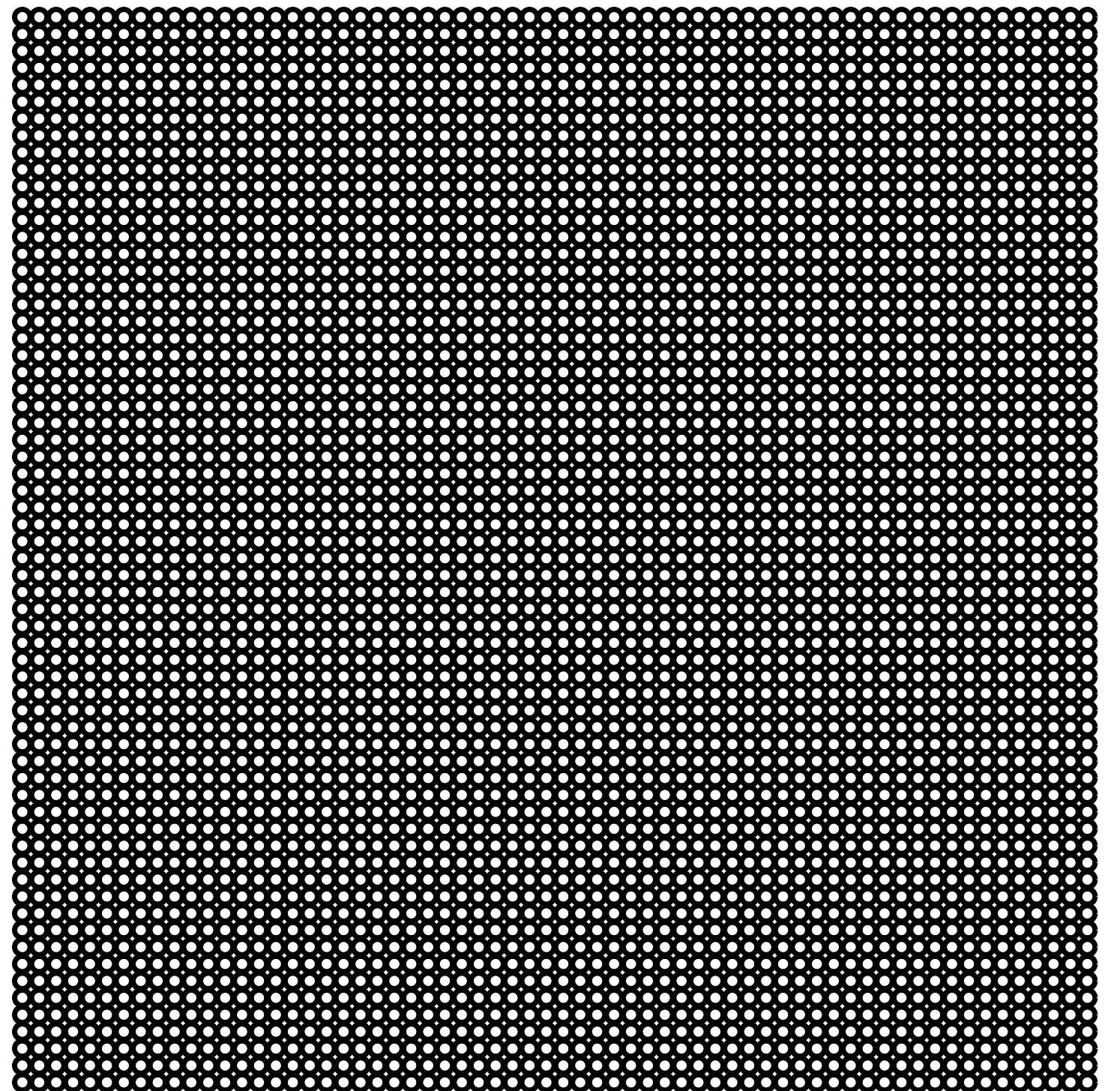}
   \caption*{$\ell = 3$\vspace*{-.4cm}}
  \end{subfigure}
  \qquad
  \begin{subfigure}{0.2\textwidth}
  \centering
   \includegraphics[width=0.7\textwidth]{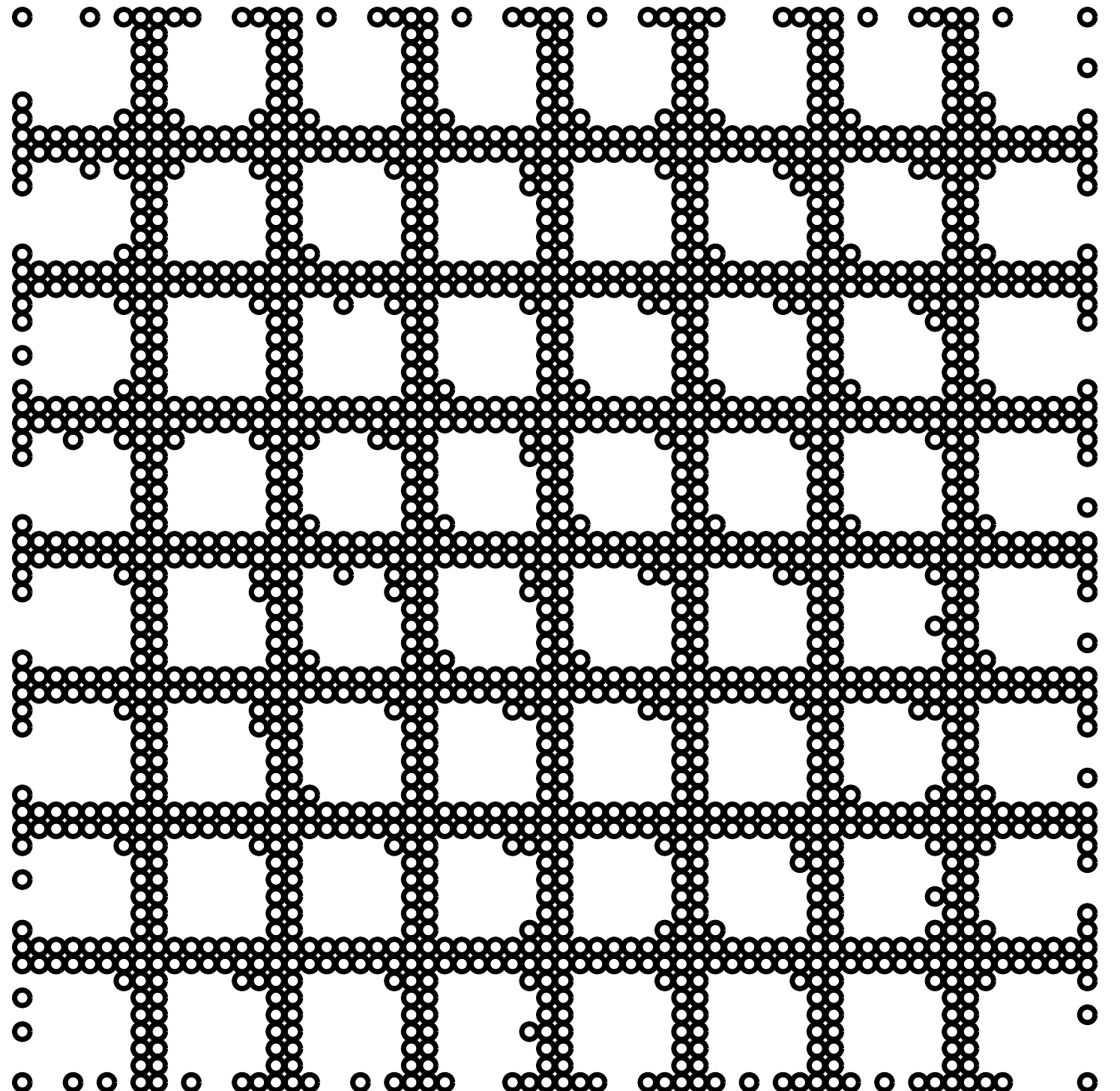}
   \caption*{$\ell = 2$\vspace*{-.4cm}}
  \end{subfigure}
  \qquad
  \begin{subfigure}{0.2\textwidth}
  \centering
   \includegraphics[width=0.7\textwidth]{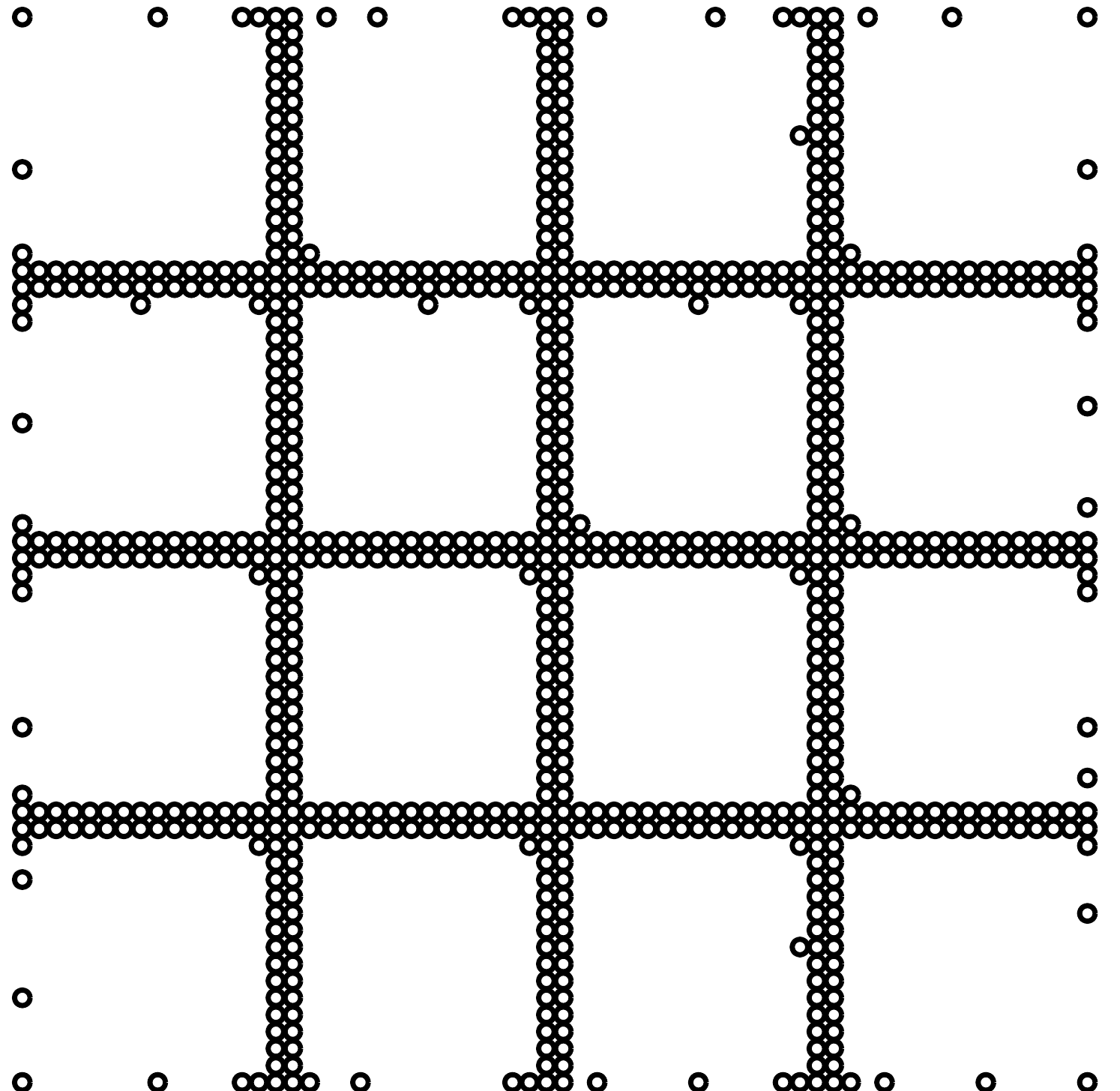}
   \caption*{$\ell = 1$\vspace*{-.4cm}}
  \end{subfigure}
  \qquad
  \begin{subfigure}{0.2\textwidth}
  \centering
   \includegraphics[width=0.7\textwidth]{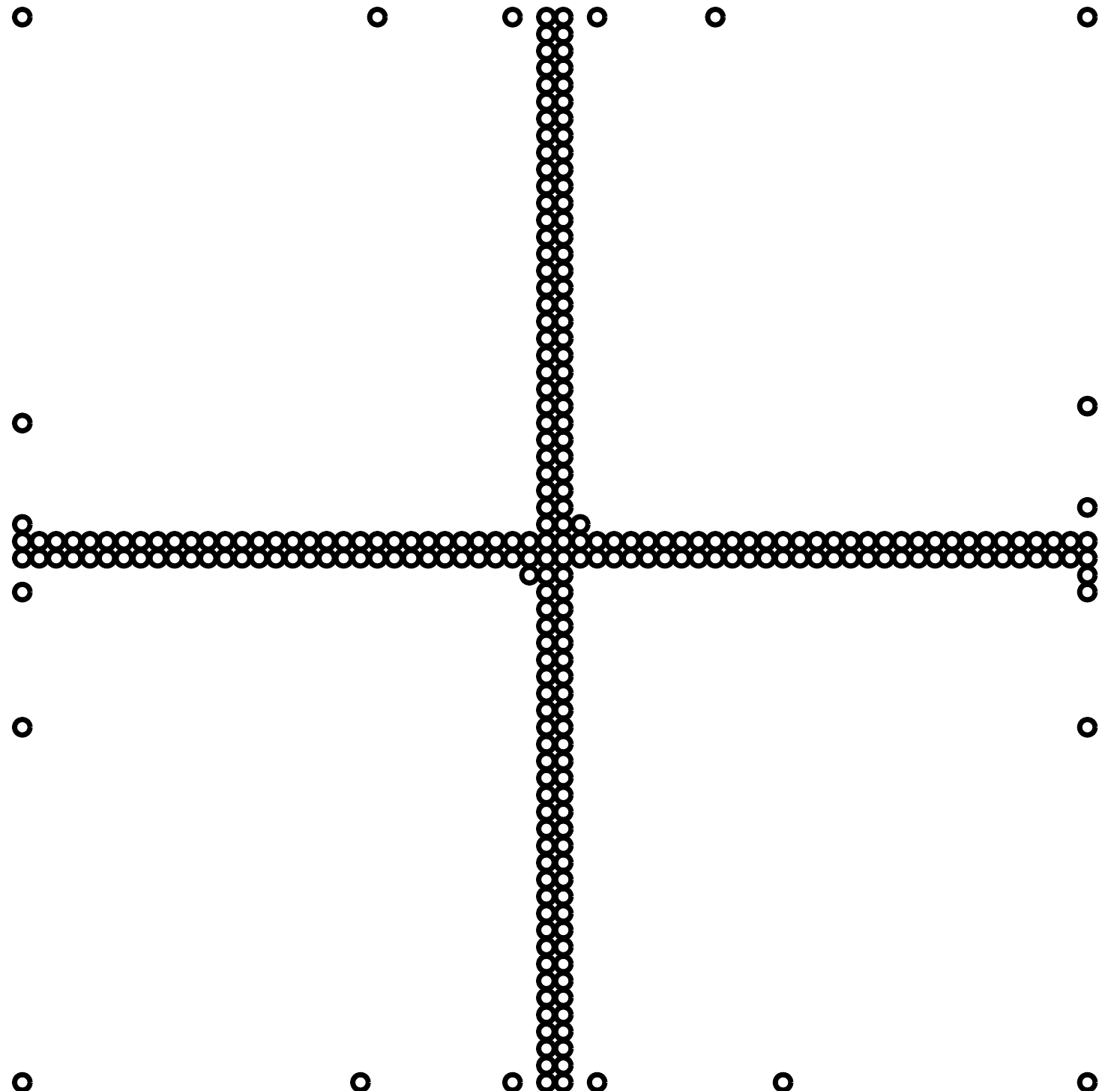}
   \caption*{$\ell = 0$\vspace*{-.4cm}}
  \end{subfigure}

  \caption{Active DOFs before skeletonizing each level $\ell$ of \rskelf{} on a quasi-1D problem (top) and true 2D problem (bottom).  We see that the DOFs cluster near the edges of the boxes of the quadtree at each level. \vspace*{-.6cm}}
  \label{fig:rskelf}
 \end{figure}

\subsection{The hierarchical interpolative factorization}\label{ref:hif}
For true 2D problems, complexity estimates give that the construction of $F$ in Algorithm \ref{alg:rskelf} costs roughly $\mathcal{O}(N^{3/2})$, as seen in the previous section. To recover linear complexity for this case, we use the \emph{hierarchical interpolative factorization} (\hif{}) as described in \cite{hifie}, which is based on the same fundamental operations as \rskelf{} but adds an extra step of skeletonization between quadtree levels.

In 2D, \hif{} proceeds as follows.  We begin just as in \rskelf{} by assuming a quadtree decomposition of space, defining $\L_L$ as in \eqref{eq:ell}, and skeletonizing $G$ with respect to $\L_L$ to obtain $\skel_{\L_L}(G)$.  At this point, rather than going directly to level $\L_{L-1},$ for each box $b$ we consider the four edges of $b$, $\edge(b)$, and perform a Voronoi tessellation of space with respect to the centers of all such edges as in Figure \ref{fig:voro} yielding the DOFs $\voro_e\subset[n]$ that are closest to $e$ in the tessellation.  We consider these edges to be part of the half-integer level $L-1/2$, and for every such edge $e$ we define the associated DOFs to be the active DOFs of its two adjacent boxes that fall within the corresponding Voronoi cell, \ie,
\begin{align}\label{eq:ellhalf}
\J_e &= \{\sk_b\cap\voro_e \, \vert \, e \in \edge(b)\}\cup \{\J_{b'}\cap\voro_e \, \vert \, e \in \edge(b),\, b'\in\N_b \text { has no children }\}.
\end{align}
Note that the second collection above is necessary to capture active DOFs at higher levels in the case of hetereogeneous refinement.
Defining the level $\L_{L-1/2}$ via $\L_{L-1/2} = \{\J_e \, \vert \, e\in\edge(b), b \text{ is a box on level $L$}\}$, we peform group skeletonization with respect to $\L_{L-1/2}$ which decouples (makes inactive) additional DOFs.

At this point, we move up to the next box level $L-1$, which requires a modification of the definition of $\J_b$ in \eqref{eq:activedofs} for non-leaf boxes $b$ at this level.  In particular, since additional DOFs are no longer active, we define $\J_b$ for \hif{} as
\begin{align}
\J_b =\bigcup_{\substack{b'\in\child(b)\\e\in\edge(b')}} \sk_b\cap\sk_e,
\end{align}
\ie, only the active DOFs of children of $b$ that are still active after edge skeletonization.  We continue by alternating between skeletonizing boxes and skeletonizing edges as summarized in Algorithm \ref{alg:hifie}.

\begin{algorithm}
\small
  \caption{Hierarchical interpolative factorization (\hif{})}
  \label{alg:hifie}
  \begin{algorithmic}
     \State $G_L = G$
   \For{$\ell = L, L-1, \dots, 1$}
   \State{\texttt{// get skeleton blocks and operators for boxes}}
   \State $\left[\left\{\sk_b,\, \rd_b,\, D_{\sk_b,\sk_b},\, D_{\rd_b,\rd_b}\right\}_{\J_b \in \L_\ell},\,U_{\L_\ell},\, V_{\L_\ell}\right] = \texttt{skel}(G_\ell,\L_\ell,\epsilon)$
   \State{\texttt{// assemble skeletonization for boxes}}
   \State $G_{\ell-1/2} = G_\ell$
    \For{$\J_b \in \L_\ell$ with $\J_b=\sk_b\cup\rd_b$}
    \State $G_{\ell-1/2}(:,\rd_b) = G_{\ell-1}(\rd_b,:) = 0$
    \State $G_{\ell-1.2}(\sk_b,\sk_b) = D_{\sk_b,\sk_b}$
    \State $G_{\ell-1/2}(\rd_b,\rd_b) = D_{\rd_b,\rd_b}$
    \EndFor
    \State{\texttt{// get skeleton blocks and operators for edges}}
   \State $\left[\left\{\sk_e,\, \rd_e,\, D_{\sk_e,\sk_e},\, D_{\rd_e,\rd_e}\right\}_{\J_e \in \L_{\ell-1/2}},\,U_{\L_{\ell-1/2}},\, V_{\L_{\ell-1/2}}\right] = \texttt{skel}(G_{\ell-1/2},\L_{\ell-1/2},\epsilon)$
   \State{\texttt{// assemble skeletonization for edges}}
   \State $G_{\ell-1} = G_{\ell-1/2}$
    \For{$\J_e \in \L_{\ell-1/2}$ with $\J_e=\sk_e\cup\rd_e$}
    \State $G_{\ell-1}(:,\rd_e) = G_{\ell-1}(\rd_e,:) = 0$
    \State $G_{\ell-1}(\sk_e,\sk_e) = D_{\sk_e,\sk_e}$
    \State $G_{\ell-1}(\rd_e,\rd_e) = D_{\rd_e,\rd_e}$
    \EndFor
   \EndFor
   \State $G \approx F \equiv U_{\L_L}^{-*}U_{\L_{L-1/2}}^{-*} \cdots U_{\L_1}^{-*}U_{\L_{1/2}}^{-*} G_0 V_{\L_{1/2}}^{-1}V_{\L_1}^{-1} \cdots V_{\L_{L-1/2}}^{-1}V_{\L_L}^{-1}$
  \end{algorithmic}
 \end{algorithm}

 \begin{figure}
  \centering
  \begin{subfigure}{0.2\textwidth}
  \centering
   \includegraphics[width=0.7\textwidth]{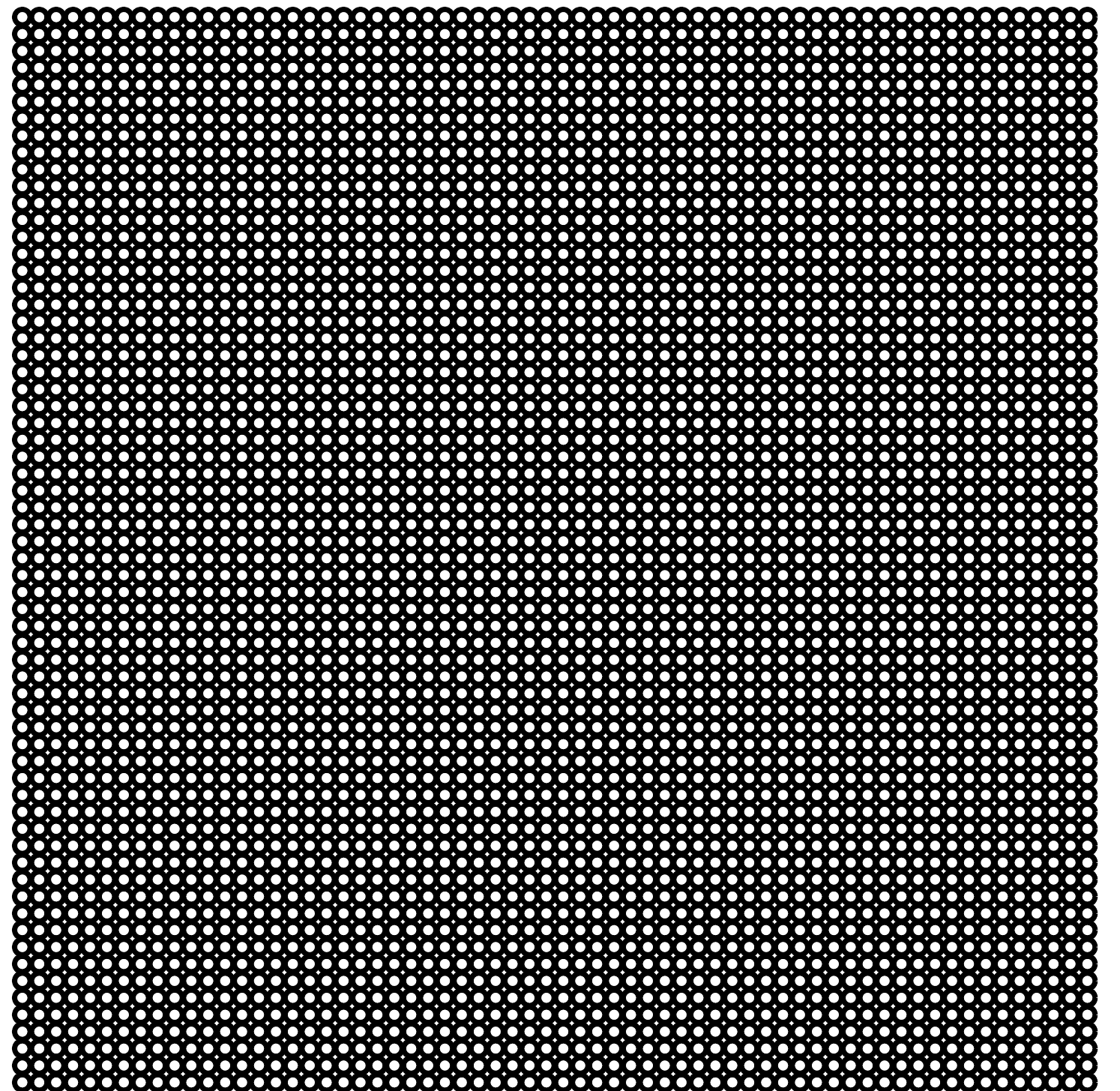}
   \caption*{$\ell = 3$\vspace*{-.4cm}}
  \end{subfigure}
  \qquad
  \begin{subfigure}{0.2\textwidth}
  \centering
   \includegraphics[width=0.7\textwidth]{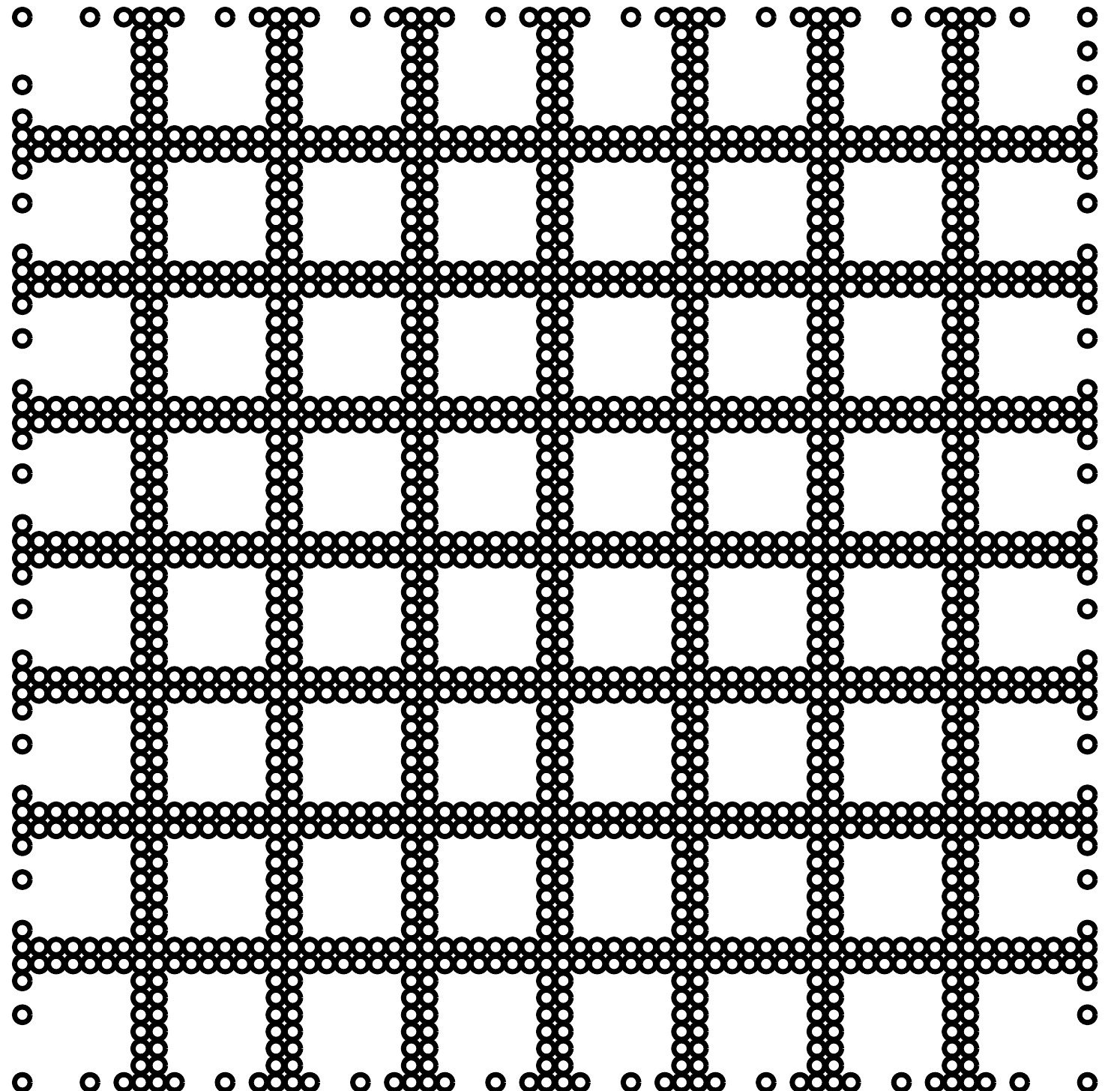}
   \caption*{$\ell = 2.5$\vspace*{-.4cm}}
  \end{subfigure}
  \qquad
  \begin{subfigure}{0.2\textwidth}
  \centering
   \includegraphics[width=0.7\textwidth]{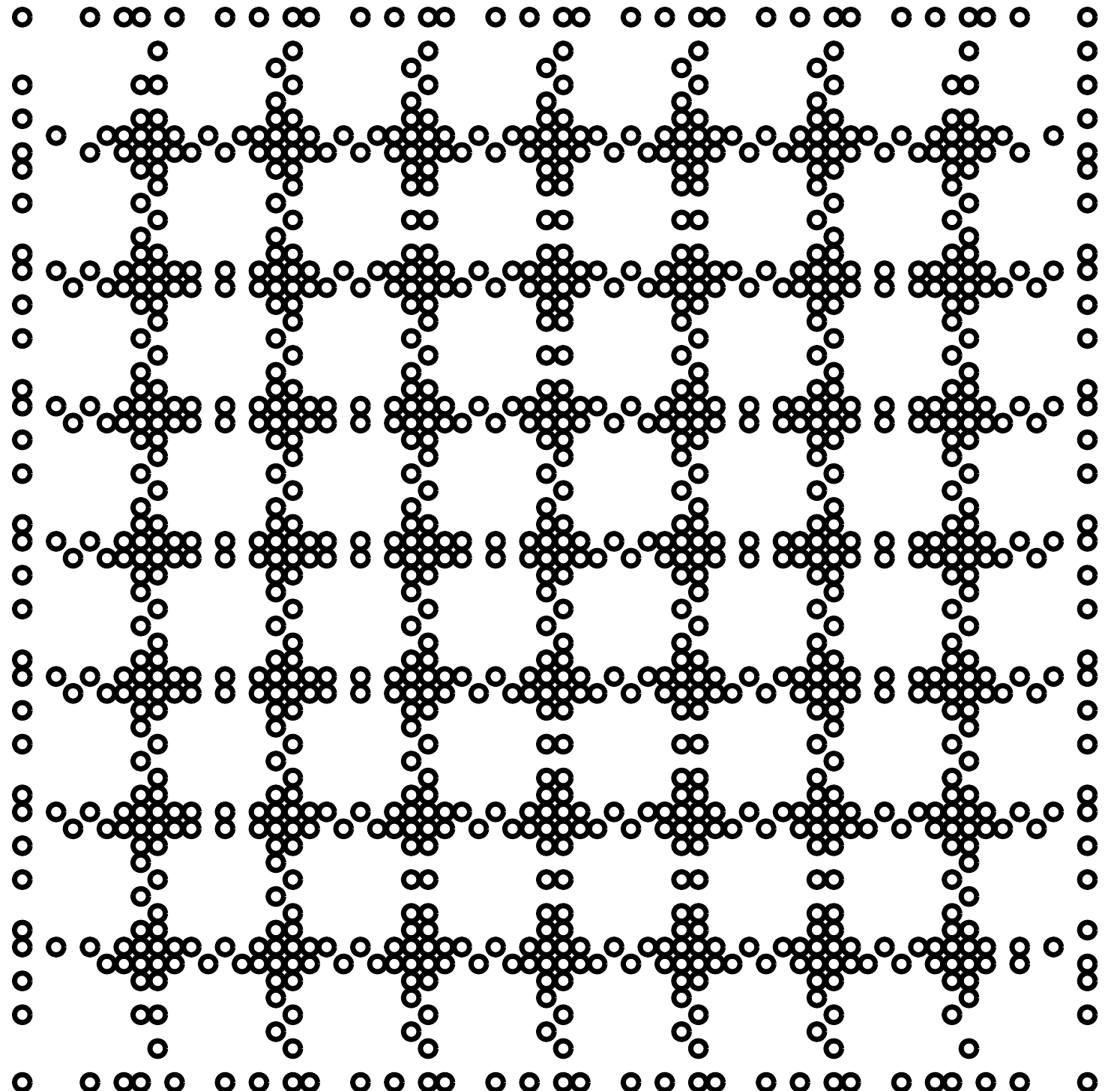}
   \caption*{$\ell = 2$\vspace*{-.4cm}}
  \end{subfigure}
  \qquad
  \begin{subfigure}{0.2\textwidth}
  \centering
   \includegraphics[width=0.7\textwidth]{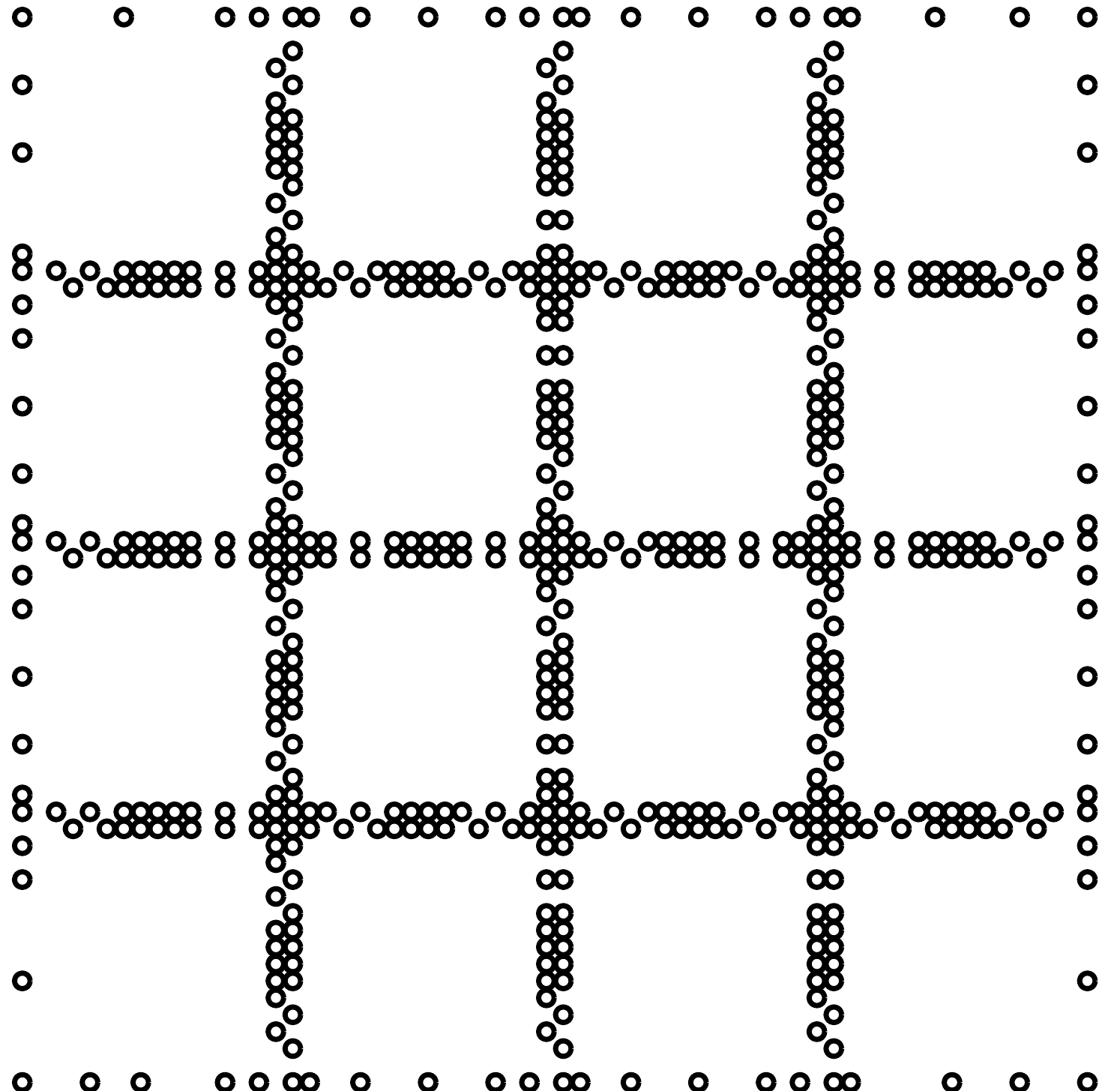}
   \caption*{$\ell = 1.5$\vspace*{-.4cm}}
  \end{subfigure}
  \\~\\~\\
  \begin{subfigure}{0.2\textwidth}
  \centering
   \includegraphics[width=0.7\textwidth]{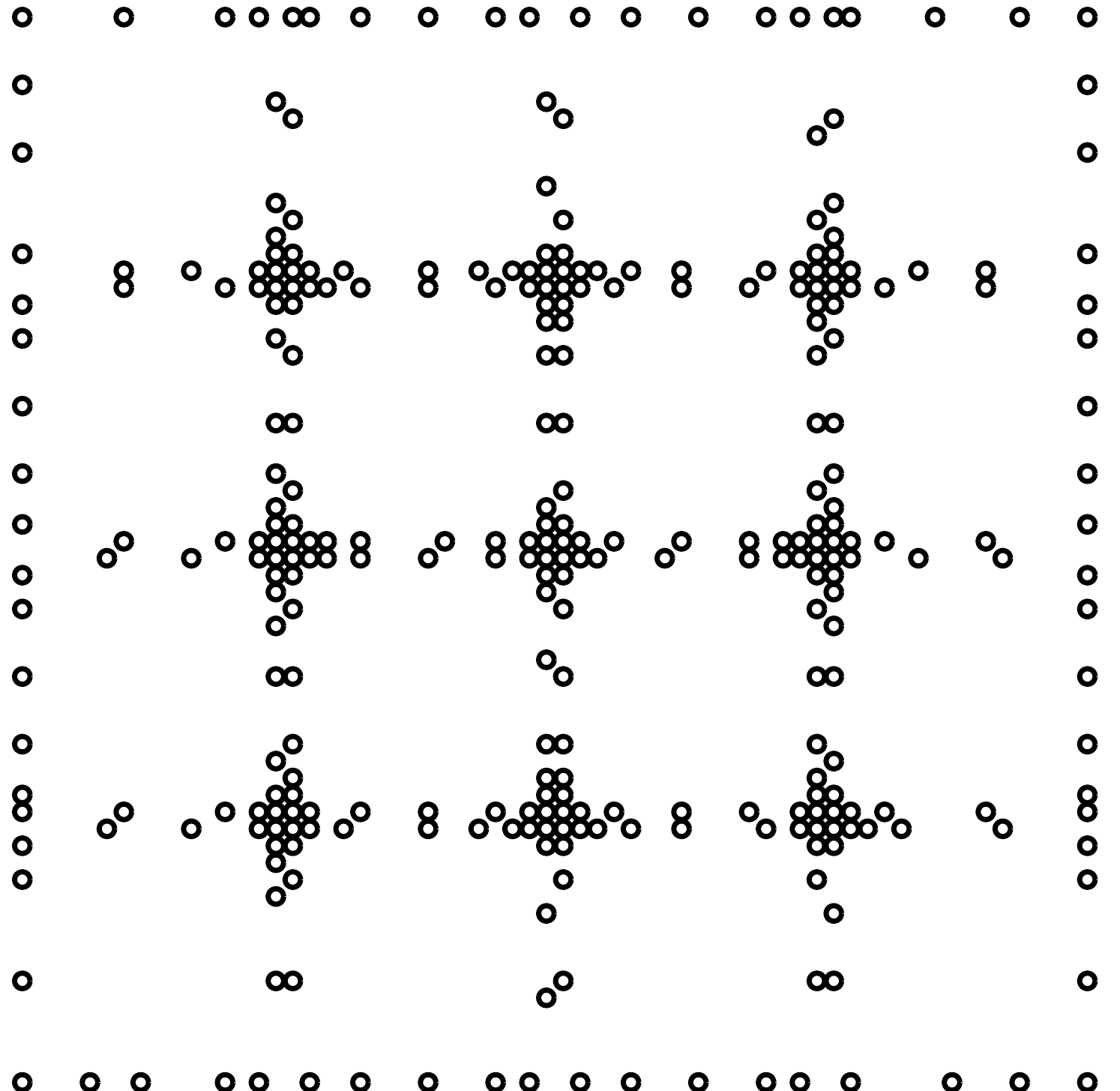}
   \caption*{$\ell = 1$\vspace*{-.4cm}}
  \end{subfigure}
  \qquad
  \begin{subfigure}{0.2\textwidth}
  \centering
   \includegraphics[width=0.7\textwidth]{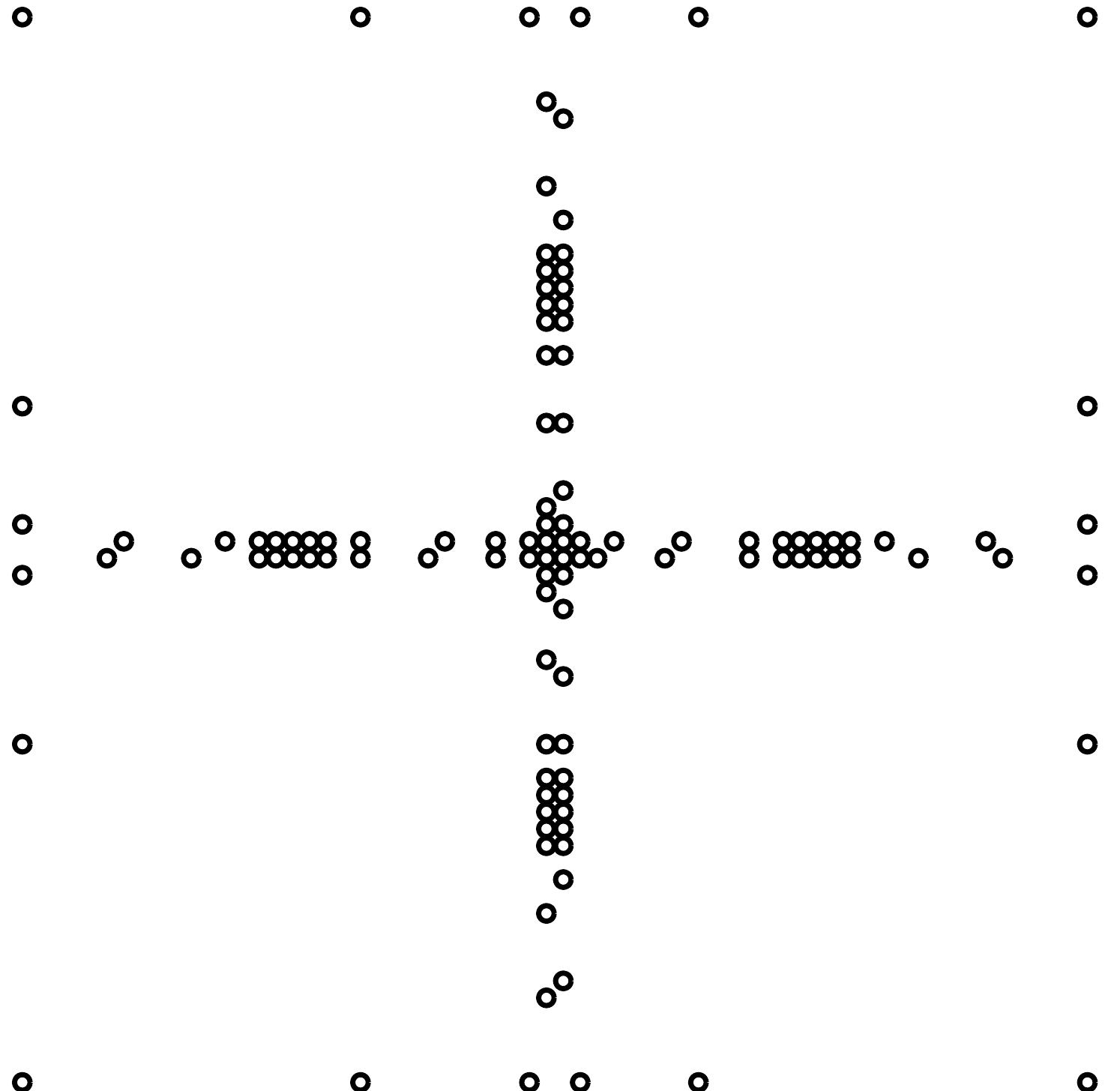}
   \caption*{$\ell = 0.5$\vspace*{-.4cm}}
  \end{subfigure}
  \qquad
  \begin{subfigure}{0.2\textwidth}
  \centering
   \includegraphics[width=0.7\textwidth]{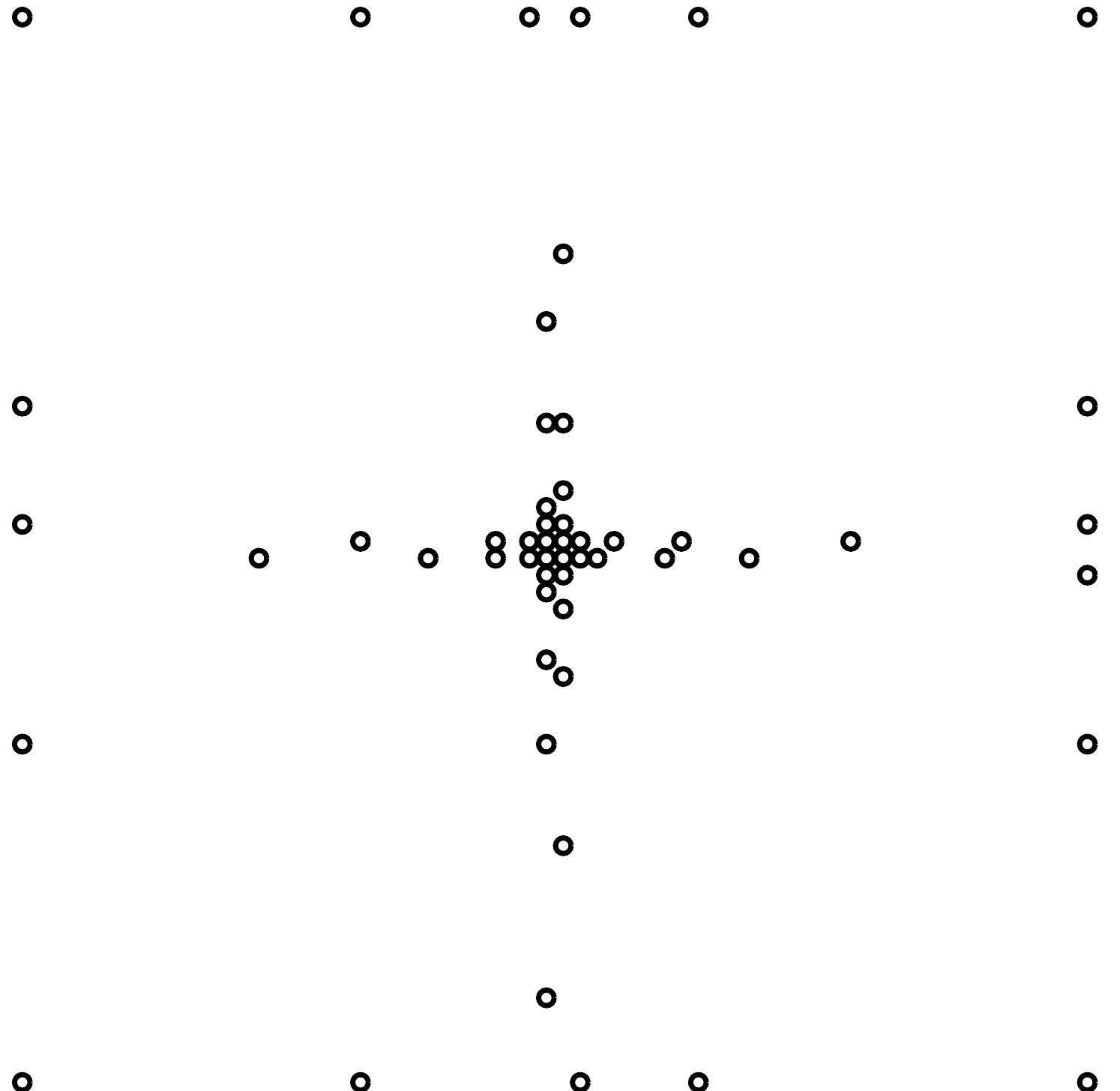}
   \caption*{$\ell = 0$\vspace*{-.4cm}}
  \end{subfigure}
  \caption{Active DOFs before skeletonizing each level $\ell$ of \texttt{hif} in 2D.  The growth of $|s_\ell|$ that was observed in the bottom of Figure \ref{fig:rskelf} appears to have been reduced dramatically. \vspace*{-.6cm}}
  \label{fig:hifie2}
 \end{figure}

There are a few idiosyncrasies to Algorithm \ref{alg:hifie} that we do not address here in detail.  For example, whereas in \rskelf{} all IDs can be shown to be applied to original blocks of the matrix $G$, in \hif{} these blocks will contain rows and columns that have been modified by Schur complement updates from the previous levels.  We direct the reader to \cite{hifie} for a thorough treatment of this and the rigorous complexity estimates.  Assuming that the edge levels admit sufficient compression (observed in practice, see Figure \ref{fig:hifie2}), however, we note that $|s_\ell|$ is asymptotically smaller than for \rskelf{} and that the computational complexity of \hif{} for elliptic kernels is now therefore $\mathcal{O}(N)$ or $\mathcal{O}(N\log N)$ for computing $F$.

\section{Updating algorithm}
\label{sec:alg}
Given the \rskelf{} and \hif{} algorithms described in Section \ref{sec:factor}, we now consider updating existing instantiations of these factorizations in response to a localized modification to the problem.  Concretely, we suppose that we have on hand a factorization corresponding to the initial problem with matrix $G$ and assume a new matrix $\new{G}$ is obtained by discretizing a locally perturbed problem as described in Section \ref{sec:localized}.  For simplicity of exposition, we initially assume that the perturbation does not modify the total number of points and does not necessitate a change in the structure of the hierarchical decomposition of space, \ie, the old quadtree is still valid for the new problem with the same occupancy bound $n_\text{occ}$, but in practice this is not necessary.  We will first discuss updating in detail for \rskelf{}, and later describe the necessary modifications for \hif{}.

As remarked in Section \ref{sec:proxy}, the use of a proxy surface as in Figure \ref{fig:proxycircle} when skeletonizing a box $b$ gives a notion of locality to the skeletonization process.  With $\N_b$ as in \eqref{eq:nbor}, $\F_b=\J_b^c\setminus\N_b$, and
\begin{align}
\left[\sk_b,\, \rd_b,\,T_{\J_b} \right] = \texttt{id}(G,\J_b,\epsilon),
\end{align}
we see that $\sk_b$, $\rd_b$, and $T_{\J_b}$ depend only on $G(\N_b,\J_b)$ and $G(\J_b, \N_b)$ and, in particular, not on $G(\F_b, \J_b)$ nor $G(\J_b, \F_b)$.  Tracing this through the rest of the skeletonization process, we therefore see that the skeletonization of $G$ with respect to box $b$ is entirely independent of $G(:, \F_b)$ and $G(\F_b,:)$, \ie, the DOF sets and matrices
\begin{align}
\left[\sk_b, \,\rd_b,\, D_{\sk_b,\sk_b},\, D_{\rd_b,\rd_b},\, U_{\J_b},\, V_{\J_b} \right] = \texttt{skel}(G,\J_b,\epsilon)
\end{align}
can be computed without looking at those entries of $G$.

Based on the above observation, it is not difficult to see that if $b$ is a box at level $L$ and $\new{G}(\F_b^c,\F_b^c) = G(\F_b^c,\F_b^c)$, then
$\texttt{skel}(\new{G},\J_b,\epsilon) = \texttt{skel}(G,\J_b,\epsilon)$ and the corresponding blocks and index sets do not need to be recomputed.  Similar logic can be applied at quadtree levels $\ell < L$ by taking into account some propagation rules we outline below.

\subsection{Propagation rules}\label{sec:prop}
We begin by defining the collection of DOF sets of boxes $b$ on level $L$ for which it is possibly the case that $\texttt{skel}(\new{G},\J_b,\epsilon) \ne \texttt{skel}(G,\J_b,\epsilon)$, which from our previous discussion is the collection
\begin{align}
\M_L = \left\{\J_b \, \vert \, \text{ $b$ is a box on level $L$ and } \new{G}(\new{\F}_b^c,\new{\F}_b^c) \ne G(\F_b^c,\F_b^c) \right\}.
\end{align}
We refer to this as the collection of \emph{marked} DOF sets (or simply marked boxes) on level $L$, and the remainder of this section is dedicated to describing the rules that determine for which boxes at levels $\ell < L$ the output of $\texttt{skel}(\new{G},\J_b,\epsilon)$ may differ and therefore must also be marked at the appropriate level.  We will use an overbar (\eg, $\new{\sk}_{\J_b}$) to distinguish between quantities corresponding to the new factorization of $\new{G}$ and the old factorization of $G$ when necessary.

It is a simple consquence of Algorithm \ref{alg:rskelf}, that in \rskelf{} the diagonal blocks satisfy a nesting property.  By this we mean that, if $b$ is a box on level $\ell < L$ with child boxes $\child(b)$, then for each $b'\in\child(b)$ it is the case that $\new{D}_{\new{\sk}_{b'},\new{\sk}_{b'}}$ is a subblock of $\new{G}_\ell(\J_b,\J_b)$.  This leads to perhaps the most self-evident propagation rule: if box $b$ is marked, then so is the parent box of $b$, $\parent(b)$.  Based on this, we define the collection
\begin{align}
\P_\ell &= \left\{\J_b \, \vert \, b= \parent(b') \text{ for some } b' \text { with } \J_{b'}\in\M_{\ell+1}  \right\}
\end{align}
for $\ell =1,\dots,L-1$

Beyond the simple child-to-parent rule, we assert that, if a node $b$ on level $\ell+1$ is marked then every $b'$ on level $\ell$ such that $\parent(b)\in\nbor(b')$ is also marked.  This is because the set $\new{\N}_{b'}\cap \new{\sk}_{b}$ will be non-empty, and, since $b$ is marked, it is thus possible that blocks involved in the ID with respect to $\J_{b'}$ will have changed.  Thus, for each $\ell < L$ we define
\begin{align}
\U_\ell &= \left\{\J_{b'} \, \vert \, b\in\nbor(b') \text{ for some } b \text { with } \J_b\in\P_\ell  \right\}.
\end{align}

Finally we note that, due to heterogeneous refinement, it is possible that there are leaf boxes $b$ at levels $\ell < L$ that have been directly modified, \ie, $\new{G}(\new{\F}_b^c,\new{\F}_b^c) \ne G(\F_b^c,\F_b^c)$.  Such boxes are also clearly marked, though they may not be covered by the previous two rules.  Combining this rule with the previous two leads us to define the collection of marked DOF sets for levels $\ell < L$ as
\begin{align}
\M_\ell = \left\{\J_b \, \vert \, \text{ $b$ is a box on level $\ell$ and } \new{G}(\F_b^c,\F_b^c) \ne G(\F_b^c,\F_b^c) \right\}\cup\P_\ell\cup\U_\ell.
\end{align}
We see an example of the evolution of the marked set $\M_\ell$ in Figure \ref{fig:hi}.

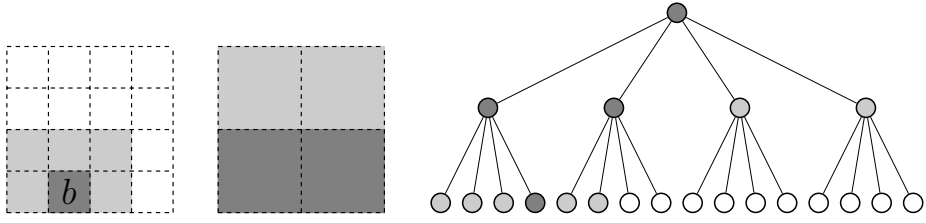
\begin{figure}
\centering

\scalebox{0.85}{
\scalebox{0.65}{
  \begin{tikzpicture}

  \filldraw[lightgray] (0,0) rectangle (3, 2);

\filldraw[darkgray,thick] (1,0) rectangle (2,1);
  \draw[step=1cm,black,dashed,thick] (0,0) grid (4,4);

  \node[font=\fontsize{44}{44}] at (1.5,0.5) {$b$};
  \end{tikzpicture}
  }
\hspace{0.3cm}
\scalebox{0.65}{
\begin{tikzpicture}

  \filldraw[lightgray] (0,2) rectangle (4, 4);
  \filldraw[darkgray,thick] (0,0) rectangle (4,2);
  \draw[step=2cm,black,dashed,thick] (0,0) grid (4,4);
\end{tikzpicture}
}
\hspace{0.3cm}
\scalebox{0.85}{
\begin{tikzpicture}
\Tree
[.\node[thick, fill=darkgray]{};
    [.\node[thick, fill=darkgray]{};
        \node[thick,fill=lightgray]{};
        \node[thick,fill=lightgray]{};
        \node[thick,fill=lightgray]{};
        \node[thick,fill=darkgray]{};
    ]
    [.\node[thick, fill=darkgray]{};
        \node[thick, fill=lightgray]{};
        \node[thick, fill=lightgray]{};
        \node[thick]{};
        \node[thick]{};
    ]
    [.\node[thick,fill=lightgray]{};
        \node[thick]{};
        \node[thick]{};
        \node[thick]{};
        \node[thick]{};
    ]
    [.\node[thick,fill=lightgray]{};
        \node[thick]{};
        \node[thick]{};
        \node[thick]{};
        \node[thick]{};
    ]
]
\end{tikzpicture}
}
}
\caption{\label{fig:hi}
 Left: Suppose the local perturbations are contained in box $b$ so that $\new{G}(:, \J_b) \neq G(:,\J_b)$ and $\new{G}(\J_b, :) \neq G(\J_b, :)$.  Initially $\M_L$ contains the DOF sets corresponding to the shaded boxes.  Center: At level $L-1$, DOF sets corresponding to the dark gray boxes are in $\P_{L-1}$ and thus $\M_{L-1}$
  because they have marked children, and the light
  gray boxes are in $\U_{L-1}$ and thus $\M_{L-1}$ because they have neighbors in $\P_{L-1}$  Right: The corresponding
  quadtree with nodes shaded the same as their associated boxes.
  \vspace*{-.6cm}}
\end{figure}

\subsection{Updating a group skeletonization}\label{sec:reskel}
At any level $\ell$ of the \rskelf{} algorithm, we have remarked that in the group skeletonization of the new system
\begin{align}
\left[\left\{\new{\sk}_b,\, \new{\rd}_b,\, \new{D}_{\new{\sk}_b,\new{\sk}_b},\, \new{D}_{\new{\rd}_b,\new{\rd}_b}\right\}_{\J_b \in \L_\ell},\,\new{U}_{\L_\ell},\, \new{V}_{\L_\ell}\right] = \texttt{skel}(\new{G}_\ell,\L_\ell,\epsilon)
\end{align}
there will be a collection of DOF sets and associated boxes for which the corresponding blocks and index sets output above do not differ from those computed in the factorization of $G$, namely, the collection $\L_\ell \setminus \M_\ell$.  For those boxes $b\in\M_\ell$ for which the corresponding output does potentially differ, we see that in computing $\texttt{skel}(\new{G}_\ell,\J_b,\epsilon)$ we require knowledge of $\new{G}_\ell(\J_b,\J_b)$.  As we have commented in the previous section, it is the case that $\new{G}_\ell(\new{\sk}_{b'},\new{\sk}_{b'}) = \new{D}_{\new{\sk}_{b'},\new{\sk}_{b'}}$ for each $b' \in \child(b)$.  In \rskelf{}, however, it is also the case that all other blocks of $\new{G}_\ell$ that are relevant to skeletonization with respect to $\J_b$ are pure kernel interactions, \ie, they coincide with blocks of $\new{G}$.

Notationally, when writing the group skeletonization $\skel_{\L_\ell}(\new{G}_\ell)$ as in \eqref{eq:groupskel}, we can partition the matrices $\new{U}_{\L_\ell}$ and $\new{V}_{\L_\ell}$ into separate products over $\M_\ell$ and $\L_\ell\setminus \M_\ell$ as
\begin{align}
\skel_{\L_\ell}(\new{G}_\ell) &\approx \new{U}_{\L_\ell}^*\new{G}_\ell \new{V}_{\L_\ell} = \new{U}_{\M_\ell}^*U_{\L_\ell\setminus\M_\ell}^*\new{G}_\ell V_{\L_\ell\setminus\M_\ell}\new{V}_{\M_\ell}\\
 &= \left(\prod_{\J\in \M_\ell}\new{U}_\J^*\right)\left(\prod_{\J\in \L_\ell\setminus\M_\ell}U_\J^*\right)\new{G}_\ell\left(\prod_{\J\in \L_\ell\setminus\M_\ell}V_\J\right)\left(\prod_{\J\in \M_\ell}\new{V}_\J\right),
\end{align}
where the matrices $U_{\L_\ell\setminus\M_\ell}$ and $V_{\L_\ell\setminus\M_\ell}$ are factors of the original $U_{\L_\ell}$ and $V_{\L_\ell}$ matrices from the factorization of $G_\ell$.  With this, we can functionally write the necessary computation to update this skeletonization as
\begin{align}
\left[\left\{\new{\sk}_b,\, \new{\rd}_b,\, \new{D}_{\new{\sk}_b,\new{\sk}_b},\, \new{D}_{\new{\rd}_b,\new{\rd}_b}\right\}_{\J_b \in \M_\ell},\,\new{U}_{\M_\ell},\, \new{V}_{\M_\ell} \right] = \texttt{skel\_update}(\new{G}_\ell,\L_\ell,\M_\ell,\epsilon),
\end{align}
explicitly avoiding redundant recomputation of the blocks we already know.

\subsection{Updating \rskelf{}}\label{sec:update}
Given the previous discussion, updating becomes a simple
two-step process for each level $\ell=L,\dots,1$.  First, the
propagation rules must be applied to determine the marked set
$\M_\ell$ for the current level.  Then, the group skeletonization is
updated according to the process outlined in Section
\ref{sec:reskel}.  Repeating this level-by-level, we obtain Algorithm
\ref{alg:update}, which is intentionally written analogously to
Algorithm \ref{alg:rskelf}.  It is important to note that the updating
process here is not an approximate one: \emph{the updated
  factorization $F$ that is obtained is identical (to machine precision) to that which would
  have been obtained starting from scratch with the same decomposition of space}.  In other words, while \rskelf{} is accurate to specified tolerance $\epsilon$ by design, no additional approximation error is introduced
in updating the factorization and there is no compounding of error
with repeated updates.

Thus far, we have considered updates that do not change the structure
of the hierarchical decomposition.  In the case of tree refinement
where quadtree nodes are created or deleted the core updating
algorithm does not change, but the collection $\L_\ell$ will now
itself have changed, potentially containing more or fewer DOF sets, and it is necessary to
perform some minor bookkeeping to ensure that DOF sets corresponding to new boxes are always
in the marked set and that factors corresponding to deleted nodes are
removed from the factorization.  In cases where the structure of the hierarchical decomposition changes, it is necessary to use a fully adaptive data structure for the quadtree such that the addition or removal of points causes corresponding refinement or coarsening of the decomposition to obtain the exact factorization via updating as one would have from factoring anew.  Without such dynamic tree maintenance, the updated factorization will still be accurate to the specified tolerance but will not be numerically the same factorization.  In our examples, we construct updates such that the same underlying tree structure is obtained and updating is numerically exact.

\begin{algorithm}
\small
  \caption{Updating \rskelf{}}
  \label{alg:update}
  \begin{algorithmic}
  \State $\new{G}_L = \new{G}$
   \For{$\ell = L, L-1, \dots, 1$}
   \State{\texttt{// get updated skeleton blocks and operators}}
   \State $\left[\left\{\new{\sk}_b,\, \new{\rd}_b,\, \new{D}_{\new{\sk}_b,\new{\sk}_b},\, \new{D}_{\new{\rd}_b,\new{\rd}_b}\right\}_{\J_b \in \M_\ell},\,\new{U}_{\M_\ell},\, \new{V}_{\M_\ell} \right] = \texttt{skel\_update}(\new{G}_\ell,\L_\ell,\M_\ell,\epsilon)$
   \State{\texttt{// assemble skeletonization}}
   \State $\new{G}_{\ell-1} = \new{G}_{\ell}$
   \For{$\J_b \in \L_\ell\setminus\M_\ell$ with $\J_b=\sk_b\cup\rd_b$}
    \State $\new{G}_{\ell-1}(:,\rd_b) = G_{\ell-1}(\rd_b,:) = 0$
    \State $\new{G}_{\ell-1}(\sk_b,\sk_b) = D_{\sk_b,\sk_b}$
    \State $\new{G}_{\ell-1}(\rd_b,\rd_b) = D_{\rd_b,\rd_b}$
    \EndFor
    \For{$\J_b \in \M_\ell$ with $\J_b=\new{\sk}_b\cup\new{\rd}_b$}
    \State $\new{G}_{\ell-1}(:,\new{\rd}_b) = \new{G}_{\ell-1}(\new{\rd}_b,:) = 0$
    \State $\new{G}_{\ell-1}(\new{\sk}_b,\new{\sk}_b) = \new{D}_{\new{\sk}_b,\new{\sk}_b}$
    \State $\new{G}_{\ell-1}(\new{\rd}_b,\new{\rd}_b) = \new{D}_{\new{\rd}_b,\new{\rd}_b}$
    \EndFor
   \EndFor
   \State $\new{G} \approx \new{F} \equiv \new{U}_{\M_L}^{-*}U_{\L_L\setminus \M_L}^{-*} \cdots \new{U}_{\M_1}^{-*} U_{\L_1\setminus\M_1}^{-*} \new{G}_0 V_{\L_1\setminus \M_1}^{-1}\new{V}_{\M_1}^{-1} \cdots V_{\L_L\setminus\M_L}^{-1}\new{V}_{\M_L}^{-1}$
  \end{algorithmic}
 \end{algorithm}
 Again, the actual assembly of $\bar G_{\ell-1}$ is not necessary and is purely notational.

\subsection{Complexity of updating \rskelf{}}
Intuitively, if a perturbation between $G$ and $\new{G}$ is localized, then the total number of marked boxes
\begin{align*}
\left|\M_0\right| + \left|\M_1\right| + \dots + \left|\M_L\right|
\end{align*}
(\ie, the number of shaded nodes in Figure \ref{fig:hi}) is
asymptotically smaller than the total number of boxes in the hierarchy
in a way that will be made rigorous.  Thus, if each box has roughly
the same skeletonization cost, we see that updating will be
asymptotically less expensive than performing a new refactorization from scratch.

The assumptions that ensure \rskelf{} is computationally
efficient (asymptotic complexity $\mathcal{O}(N)$ to factor)
serve to control the number of active DOFs at each level.  Let
$k_\ell$ independent of $b$ denote a bound on the number of skeleton DOFs $|\new{\sk}_b|$ or $|\sk_b|$ for a box $b$ at level $\ell$.  For \rskelf{}, standard multipole estimates
show that factorization of an elliptic system with a quasi-1D boundary
leads to $k_\ell$ growing linearly as we progress up the tree, \ie,
$k_\ell =\mathcal{O}(L-\ell)$ (\emph{cf.}
\cite{martinsson-rokhlin,hifie}).  The cost of skeletonizing a box is
dominated by the cost of the ID, which is cubic in the number of
skeleton DOFs. Therefore the cost of skeletonizing a box on level $\ell$ is $\mathcal{O}\left((L-\ell)^3\right)$.

After a single leaf-level perturbation, \ie, a perturbation that is localized such that $\new{G}(\J_b,:) \ne G(\J_b,:)$ and  $\new{G}(:, \J_b) \ne G(:,\J_b)$ only for a single leaf box $b$, Lemma \ref{lem:lem} shows that
the total number of boxes that need to have their components of the skeletonization updated at any level $\ell$ is bounded
by a small constant $C$.  With this lemma, we show in the proof of
Theorem \ref{th:rskelupdate} that the cost of updating after $m$
leaf-level perturbations has asymptotic complexity
$\mathcal{O}(m\log^4 N)$, \ie, linear in $m$ but only poly-log in the
total number of DOFs.

\begin{lemma}\label{lem:lem}
  Suppose that $\new{G}(\J_{b_L},:) \ne G(\J_{b_L},:)$ and  $\new{G}(:, \J_{b_L}) \ne G(:,\J_{b_L})$ only for a single leaf box $b_L$ on level $L$.  Then the size of the marked set, $|\M_\ell|$, is bounded by a small dimension-dependent constant $C$ independent
  of $N$ and $\ell$.
\end{lemma}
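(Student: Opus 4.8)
The plan is to bound $|\M_\ell|$ by tracking how the marked set propagates up the tree using the three rules from Section \ref{sec:prop}, and to exploit the fact that at each level the marked boxes must all lie within a bounded number of box-widths of the single perturbed leaf $b_L$. First I would establish the base case: since only $b_L$ is directly perturbed, $\M_L$ consists precisely of those leaf boxes $b$ with $\new{G}(\F_b^c,\F_b^c)\ne G(\F_b^c,\F_b^c)$, and by the locality observation at the end of Section \ref{sec:alg} this means $b_L\in\nbor(b)$ or $b = b_L$, so $\M_L\subseteq\{b_L\}\cup\nbor(b_L)$. In 2D with a quadtree, $|\nbor(b_L)|$ is bounded by a dimension-dependent constant (at most $8$ for a uniform tree, slightly more with heterogeneous refinement but still $O(1)$), giving $|\M_L|\le C$ for the base level.

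Next I would argue inductively. The key geometric invariant to maintain is: \emph{every marked box at level $\ell$ is contained in a fixed-size neighborhood $\mathcal{B}_\ell$ of the leaf $b_L$} — concretely, a square of side a constant number of level-$\ell$ box widths centered on the ancestor of $b_L$ at level $\ell$. Assuming this holds at level $\ell+1$, I trace the three propagation rules. The parent rule sends $\M_{\ell+1}$ to $\P_\ell$, which at worst reduces the number of boxes (four children share a parent) and keeps everything inside the corresponding parent neighborhood — so $\P_\ell$ lives in a constant-size box-neighborhood of $b_L$'s level-$\ell$ ancestor. The neighbor rule sends $\P_\ell$ to $\U_\ell$ by adding, for each $b\in\P_\ell$, the boxes $b'$ with $b\in\nbor(b')$; since $|\nbor(b')|=O(1)$ and $\nbor$ only reaches adjacent boxes (possibly at coarser levels, but still spatially adjacent), this enlarges the neighborhood by a constant number of box-widths, preserving the invariant with a slightly larger constant. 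The third rule (directly-modified leaf boxes at coarser levels) contributes nothing new under the single-leaf-perturbation hypothesis, since the only directly perturbed DOFs are in $b_L$ at level $L$. Because a fixed-size spatial neighborhood at level $\ell$ contains only $O(1)$ level-$\ell$ boxes, the invariant immediately yields $|\M_\ell|\le C$ for a constant $C$ absorbing all the dimension-dependent factors accumulated across rules, uniform in $\ell$ and $N$.

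The main obstacle I anticipate is handling heterogeneous tree refinement cleanly. The $\nbor$ function (\eqref{eq:nbor}) maps a box to adjacent boxes that may live at \emph{coarser} levels when they have no children, and the edge-case definitions in \eqref{eq:ellhalf} and the second clause of the propagation rules are written to accommodate this. So the geometric "neighborhood of bounded box-width" argument needs to be stated carefully enough that a coarse neighbor counts as a single box but its spatial extent is still controlled — one wants that the union of all marked boxes at level $\ell$ has diameter $O(1)$ level-$\ell$ box-widths regardless of local refinement depth, which follows because adjacency in a balanced-or-unbalanced quadtree still only connects spatially touching boxes and the "no children" coarse boxes are, if anything, larger and fewer. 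I would phrase the constant $C$ as depending on the dimension $d$ and on the maximum number of neighbors a box can have (itself bounded in terms of $d$ and any level-difference bound one imposes on adjacency), and remark that for a level-restricted (balanced) quadtree this is a clean explicit constant. Everything else is bookkeeping: there is no hard analytic content, only a careful propagation-of-locality argument, which is exactly what the informal discussion preceding the lemma already sketches.
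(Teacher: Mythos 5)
Your overall strategy is the same as the paper's: the paper also proves this by a spatial-locality induction, formalizing your ``fixed-size neighborhood of the ancestor of $b_L$'' as a \emph{reach} $r_\ell$, the $\ell_\infty$-radius (in level-$\ell$ box widths) of $\M_\ell$ about the level-$\ell$ ancestor of $b_L$, with base case $r_L = 1$ and then bounding $\lvert\M_\ell\rvert \le (2r_\ell+1)^d$. The one substantive point where your write-up falls short of a proof is the inductive step. The lemma needs a bound independent of $\ell$, and your step concludes only that the neighbor rule ``enlarges the neighborhood by a constant number of box-widths, preserving the invariant with a slightly larger constant.'' If the constant in your invariant is allowed to grow by an additive amount at every level, then over the $L = \mathcal{O}(\log N)$ levels it accumulates, and you obtain only a poly-logarithmic bound on $\lvert\M_\ell\rvert$, not a constant one (which would also degrade the $\log^4 N$ claim of Theorem \ref{th:rskelupdate}). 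What closes the induction with a single fixed constant is the quantitative interplay you mention but never use: passing from children to parents \emph{halves} the radius measured in box widths of the coarser level, and the neighbor rule then adds one. The paper records this as the recurrence $r_\ell = \lceil r_{\ell+1}/2\rceil + 1$, which has fixed point $r_\ell = 2$; since $r_L = 1 \le 2$, the reach never exceeds $2$ at any level, and $\lvert\M_\ell\rvert \le (2\cdot 2+1)^d$ uniformly. Adding that one computation (or an equivalent ``radius $\le c$ implies radius $\le \lceil c/2\rceil + 1 \le c$ for $c \ge 2$'' step) turns your sketch into the paper's proof; your extra care about heterogeneous refinement and the third marking rule is fine but not where the content lies --- the paper simply works with grid coordinates at each level and does not need it.
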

\begin{proof}
  In any dimension, $d$, we can associate each box $b$ on a given
  level with a $d$-tuple of integer coordinates, $(z_1,\dots,z_d),$
  corresponding to the center of the box in the grid at that level.
  It is natural to consider the $\ell_\infty$-distance associated with
  this representation,
\begin{align*}
\|b - b'\|_\infty \equiv \|\left(z_1,\,\dots\,,z_d\right)  - \left(z'_1,\,\dots,\,z_d'\right)\|_\infty,
\end{align*}
which codifies the idea ``$b$ is $\|b - b'\|_\infty$ boxes away from
$b'$''.  With this distance in mind, we begin by defining the concept
of \emph{reach} at a level, $r_\ell$.  With $b_\ell$ the single ancestor of $b_L$ at level $\ell$, we define the reach at level $\ell$ as
\begin{align*}
r_\ell \equiv \max_{b\in \M_\ell} \|b_\ell - b \|_\infty,
\end{align*} \ie, it is the $\ell_\infty$ radius of the marked set at level $\ell$.

The key observation is that
the bottom level reach is $r_L\equiv1$ and the reach at subsequent levels
does not much exceed this size.  In particular, the reach satisfies
the recurrence relation
\begin{align*}
r_\ell = \left\lceil\frac{r_{\ell+1}}{2}\right\rceil + 1,
\end{align*}
where division by two corresponds to the fact that marked boxes on a
level are contiguous and $r_{\ell+1}$ contiguous boxes have at most
$\left\lceil{\frac{r_{\ell+1}}{2}}\right\rceil$ parents, and adding
one corresponds to marking all neigbors of these parents. This
relation has a fixed point at $r_\ell=2$.  Therefore, in $d$
dimensions, the size of the marked set is bounded as
$\lvert\M_\ell\rvert \le (2r_\ell + 1)^d \le C$.
\end{proof}

\begin{theorem}[Complexity of updating \rskelf{}]\label{th:rskelupdate}Assume that the the number of
  skeletons for a box at level $\ell$, $k_\ell$, grows like
  $\mathcal{O}(L-\ell)$.
Suppose we use the updating technique of Section \ref{sec:update} to construct an updated factorization of $\new{G}$ given a factorization of $G$, where $\new{G}(\J_{b},:) \ne G(\J_{b},:)$ and  $\new{G}(:, \J_{b}) \ne G(:,\J_{b})$ only for a collection of boxes $b$ of size $m$.
Then, for an integral equation with elliptic kernel on a quasi-1D domain, the complexity of updating \rskelf{} is
$\mathcal{O}(m\log^4 N)$.
\end{theorem}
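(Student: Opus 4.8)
The plan is to reduce the general $m$-box perturbation to the single-box case already handled by Lemma \ref{lem:lem}, to bound the per-box cost of \texttt{skel\_update} using the growth assumption on $k_\ell$, and then to sum over the $\mathcal{O}(\log N)$ levels of the quadtree.

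First I would argue that the propagation rules of Section \ref{sec:prop} are monotone and subadditive in the initial marked set. Assume first that the $m$ perturbed boxes $b^{(1)},\dots,b^{(m)}$ are leaves on level $L$ (the general case is identical, running the reach recurrence of Lemma \ref{lem:lem} from whichever level each perturbation enters the hierarchy). Then $\M_L$ is exactly the union of the $m$ single-box leaf marked sets, each being $b^{(i)}$ together with its neighbors. Since $\P_\ell$ is assembled from parents of the boxes in $\M_{\ell+1}$, $\U_\ell$ from neighbors of the boxes in $\P_\ell$, and the directly-modified contribution is again a union, a downward induction on $\ell$ gives $\M_\ell \subseteq \bigcup_{i=1}^{m}\M_\ell^{(i)}$, where $\M_\ell^{(i)}$ is the marked set produced by perturbing $b^{(i)}$ alone. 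Lemma \ref{lem:lem} then yields $|\M_\ell| \le \sum_{i=1}^{m}|\M_\ell^{(i)}| \le mC$ at every level $\ell$, with $C$ a dimension-dependent constant independent of $N$ and $\ell$.

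Next I would account for the work at a single level $\ell$. For each marked box $b\in\M_\ell$, the discussion in Section \ref{sec:reskel} shows that every block of $\new{G}_\ell$ needed to skeletonize $\J_b$ is either a pure kernel interaction (read directly from $\new{G}$) or one of the child diagonal blocks $\new{D}_{\new{\sk}_{b'},\new{\sk}_{b'}}$ already produced lower in the tree; in particular no unmarked operators must be applied. By the proxy construction of Section \ref{sec:proxy} and \eqref{eq:nbor}, skeletonizing $b$ is a purely local operation whose dominant cost is the ID, which is cubic in the number of skeleton DOFs, i.e. $\mathcal{O}(k_\ell^3) = \mathcal{O}\bigl((L-\ell)^3\bigr)$ under the assumed growth. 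Determining $\M_\ell$ from $\M_{\ell+1}$ by the propagation rules costs $\mathcal{O}(|\M_{\ell+1}|) = \mathcal{O}(m)$, since each marked box contributes only its parent and the $\mathcal{O}(1)$ neighbors thereof, and the bookkeeping in Algorithm \ref{alg:update} that reassembles $\new{G}_{\ell-1}$ over $\M_\ell$ is likewise dominated by the ID cost. Hence the total work at level $\ell$ is $\mathcal{O}\bigl(mC\,(L-\ell)^3\bigr)$.

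Summing over levels,
\[
\sum_{\ell=1}^{L}\mathcal{O}\!\bigl(mC\,(L-\ell)^3\bigr) \;=\; \mathcal{O}\!\left(m\sum_{j=0}^{L-1}j^3\right) \;=\; \mathcal{O}(mL^4).
\]
Since a quasi-1D domain has $\mathcal{O}(N)$ occupied leaf boxes and the adaptive quadtree is refined to occupancy $n_{\text{occ}}$, the tree depth satisfies $L = \mathcal{O}(\log N)$, giving the claimed complexity $\mathcal{O}(m\log^4 N)$. I expect the main obstacle to be the first step: carefully justifying that the marked set for $m$ simultaneous perturbations is contained in the union of the $m$ single-perturbation marked sets — including the heterogeneous-refinement corner cases — so that Lemma \ref{lem:lem} can be applied box by box. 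Once that subadditivity is established, the rest is a routine accounting of ID costs against the assumed growth $k_\ell = \mathcal{O}(L-\ell)$ and the logarithmic tree depth.
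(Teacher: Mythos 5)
Your proposal is correct and follows essentially the same route as the paper: bound $|\M_\ell| \le mC$ via Lemma \ref{lem:lem}, charge $\mathcal{O}(k_\ell^3) = \mathcal{O}((L-\ell)^3)$ per marked box for the ID, and sum over the $L = \mathcal{O}(\log N)$ levels to get $\mathcal{O}(m\log^4 N)$. The only difference is that you spell out the monotonicity/subadditivity of the propagation rules to justify the union bound on the marked sets, a step the paper's proof simply asserts; this is a reasonable elaboration, not a different approach.
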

\begin{proof}
On level $\ell$ we need to update the skeletonization blocks corresponding to $\lvert \M_\ell \rvert$ boxes, each of which has at most $k_\ell$ skeleton DOFs.  This costs $\mathcal{O}(k_\ell^3)$ per box.  This means the total re-factorization time, $t$, grows as
\begin{align*}
t = \sum_{\ell=0}^L \lvert \M_\ell \rvert \mathcal{O}(k_\ell^3) = \mathcal{O}\left(mC\sum_{\ell=0}^L (L-\ell)^3\right) = \mathcal{O}(mC\log^4 N),
\end{align*}
where we have used the fact that the marked set resulting from $m$
leaf-level modifications is no bigger than $m$ times the maximum
marked set size of a single box, $C$ from Lemma \ref{lem:lem}, as well as the fact that our quadtree is constructed such that $L=\mathcal{O}(\log N)$.  Note
that this bound is clearly weak, as the number of marked boxes
on a level is of course limited by the total number of boxes on that
level; for example, if $m = \mathcal{O}(N)$ then $t=\mathcal{O}(N)$.
\end{proof}

\subsection{Modifications for \hif{}}
To adapt the updating process for \rskelf{} to an updating
process for \hif{}, we use the same basic building blocks of
identifying marked boxes (and now marked edges) and updating the corresponding skeletonizations.
Incorporating the half-integer edge levels, however, complicates the
process.

In Section \ref{sec:reskel} we used the nesting
property of diagonal blocks in \rskelf{} to assert that, for a box $b$
$\new{G}_\ell(\new{\sk}_{b'},\new{\sk}_{b'}) = \new{D}_{\new{\sk}_{b'},\new{\sk}_{b'}}$ for each $b' \in \child(b)$ and all other entries of $\new{G}_\ell(\J_b,\J_b)$ are pure kernel interactions.  For \hif{}, this is no longer the case due to mixing of matrix blocks between box and edge levels, and thus keeping track of the
state of interactions between DOFs becomes more complicated.

In particular, in \hif{}, the block $\new{G}_\ell(\J_b,\J_b)$ for a box $b$ at level $\ell < L$ has only a subset of entries that come directly from the blocks $\new{D}_{\new{\sk}_{b'},\new{\sk}_{b'}}$ corresponding to $b'\in\child(b)$.  Other entries have now recieved Schur complement updates from the skeletonization of edges at level $\ell+1/2$, see
Figure \ref{fig:edgeupdates}.  The intuition for defining $\M_\ell$ and $\M_{\ell-1/2}$ follows essentially the same reasoning as for \rskelf{}, taking into account this extra mixing of information due to Schur complement updates. Because of this, the marked set $\M_\ell$ for \hif{}
will be larger than that for \rskelf{}, though not asymptotically
so.

Just as in the \rskelf{} case, the updating procedure described here for \hif{} is exact in that the same factorization is obtained as would have been obtained when computing a new \hif{} factorization on the same decomposition of space.

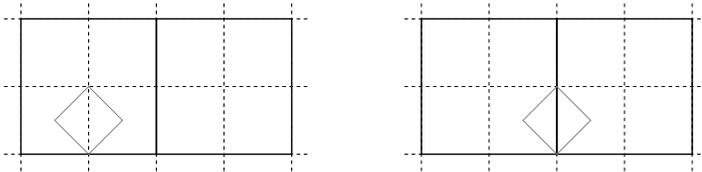
\begin{figure}
\centering
\begin{subfigure}{0.4\textwidth}
  \scalebox{0.45}{
      \begin{tikzpicture}
      \draw[step=2cm,black,dashed,thick] (-0.5,-0.5) grid (8.5,4.5);
      \draw[black,very thick] (0,0) rectangle (4,4);
      \draw[black,very thick] (4,0) rectangle (8,4);
      \draw[thick,darkgray,shift={(2,0)},rotate=45] (0,0) rectangle (1.414,1.414);
 \end{tikzpicture}
 }
\end{subfigure}
\begin{subfigure}{0.4\textwidth}
  \scalebox{0.45}{
      \begin{tikzpicture}
      \draw[step=2cm,black,dashed,thick] (-0.5,-0.5) grid (8.5,4.5);
      \draw[black,very thick] (0,0) rectangle (4,4);
      \draw[black,very thick] (4,0) rectangle (8,4);
      \draw[thick,darkgray,shift={(4,0)},rotate=45] (0,0) rectangle (1.414,1.414);
 \end{tikzpicture}
 }
\end{subfigure}
\caption{\label{fig:edgeupdates} Left: When we skeletonize with respect to the DOFs associated with the shown Voronoi cell of an edge between two small boxes $b_1$ and $b_2$ with the same parent $b$ at level $\ell$, a Schur complement update is performed that modifies the entries $\new{G}_\ell(\new{\sk}_{b_1},\new{\sk}_{b_2})$ and $\new{G}_\ell(\new{\sk}_{b_2},\new{\sk_{b_1}})$ to no longer be original subblocks of $\new{G}$, which modifies the block $\new{G}_\ell(\J_b,\J_b)$.  Right: Similarly, when skeletonizing with respect to the DOFs associated with the Voronoi cell of an edge that also comprises part of an edge of $b$, a Schur complement update occurs that modifies the blocks $\new{G}(:,\J_b)$ and $\new{G}(\J_b,:)$, entries of which are used in the ID portion of skeletonization with respect to $\J_b$. \vspace*{-.6cm}}
\end{figure}

\subsection{Complexity of updating \hif{}}

The asymptotic complexity of updating \hif{} using the same
technique as for \rskelf{} follows essentially the same path of
reasoning.
\begin{theorem}[Complexity of updating \hif{}]\label{th:hifupdate} Assume that the the number of
  skeletons for a box at level $\ell$, $k_\ell$, grows like
  $\mathcal{O}(L-\ell)$.   Suppose we use the updating technique of Section \ref{sec:update} to construct an updated factorization of $\new{G}$ given a factorization of $G$, where $\new{G}(\J_{b},:) \ne G(\J_{b},:)$ and  $\new{G}(:, \J_{b}:) \ne G(:,\J_{b})$ only for a collection of boxes $b$ of size $m$.
Then, for an integral equation with elliptic kernel on a 2D domain, the complexity of updating \hif{} is
$\mathcal{O}(m\log^4 N)$.
\end{theorem}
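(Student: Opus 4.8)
The plan is to follow the same two-part strategy used for Theorem~\ref{th:rskelupdate}: first establish an analog of Lemma~\ref{lem:lem} bounding the size of the marked set at every integer and half-integer level by an $N$-independent constant after a single leaf-level perturbation, and then sum the per-box (and per-edge) skeletonization cost over all levels.

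For the first part, I would track the reach $r_\ell$ of the marked set (and an analogous reach for the edge levels) through the propagation rules, now taking into account that skeletonizing an edge at level $\ell+1/2$ performs Schur complement updates that modify $\new{G}_\ell(\J_b,\J_b)$ as well as $\new{G}(:,\J_b)$ and $\new{G}(\J_b,:)$ for the boxes $b$ adjacent to that edge, as illustrated in Figure~\ref{fig:edgeupdates}. Thus, in addition to the child-to-parent, parent-neighbor, and direct-modification rules of Section~\ref{sec:prop}, a box becomes marked if one of its edges---or an edge of a neighboring childless box---lies in the marked edge set, and an edge becomes marked if either of its two adjacent boxes is marked or one of those boxes has a marked neighbor. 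Each of these rules expands the $\ell_\infty$-radius of the marked set by at most a fixed additive constant, while halving of the radius still occurs when passing to the next coarser box level. Consequently the reach again satisfies a recurrence of the form $r_\ell \le \lceil r_{\ell+1}/2\rceil + c$ for a small constant $c$ (somewhat larger than the $c=1$ appearing in Lemma~\ref{lem:lem}, because of the extra edge mixing), which has a bounded fixed point. Hence $\lvert\M_\ell\rvert \le (2r_\ell+1)^d \le C'$, and the number of marked edges at each half level is likewise bounded, since edges are associated with boxes and each box has a bounded number of edges.

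For the second part, the argument is identical to the proof of Theorem~\ref{th:rskelupdate}. By linearity of the marking process, $m$ leaf-level perturbations produce a marked set at each level and half level of size at most $mC'$; each marked box or edge at level $\ell$ has at most $k_\ell = \mathcal{O}(L-\ell)$ skeleton DOFs and costs $\mathcal{O}(k_\ell^3)$ to re-skeletonize via the $\texttt{skel\_update}$ routine of Section~\ref{sec:reskel}; and the extra bookkeeping required to propagate the Schur complement updates across the half levels only contributes a constant factor. Summing over all $2L+1$ stages gives
\begin{align*}
t = \sum_{\ell} \lvert \M_\ell \rvert\, \mathcal{O}(k_\ell^3) = \mathcal{O}\!\left(m C' \sum_{\ell=0}^{L}(L-\ell)^3\right) = \mathcal{O}\!\left(m C' \log^4 N\right),
\end{align*}
using the fact that the quadtree has $L = \mathcal{O}(\log N)$ levels, which is the claimed bound.

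The main obstacle is the first part: carefully formalizing the \hif{} propagation rules through the alternating box and edge levels and verifying that the Schur complement mixing depicted in Figure~\ref{fig:edgeupdates} enlarges the reach by only a bounded additive amount at each step, so that the reach recurrence still has a finite fixed point. Once a constant-size marked set is in hand, the complexity count is a routine repetition of the \rskelf{} argument, with the observation that the $\mathcal{O}(L-\ell)$ skeleton-size bound and the cubic per-box cost carry over unchanged to the edge skeletonizations.
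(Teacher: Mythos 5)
Your proposal follows essentially the same route as the paper: bound the reach of the marked set via a recurrence of the form $r_\ell \le \lceil r_{\ell+1}/2\rceil + c$ with a larger additive constant due to the edge-level Schur complement mixing (the paper makes this explicit as $r_\ell = \lceil r_{\ell+1}/2\rceil + 2$, with fixed point $4$ rather than $2$), conclude $\lvert\M_\ell\rvert \le (2r_\ell+1)^d \le C'$, and then repeat the summation $\sum_\ell \lvert\M_\ell\rvert\,\mathcal{O}(k_\ell^3) = \mathcal{O}(m\log^4 N)$ from Theorem~\ref{th:rskelupdate}. This is correct and matches the paper's argument.
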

\begin{proof}
The proof is essentially the same as that of Theorem
\ref{th:rskelupdate} taking into account edge-level skeletonization as
well as box-level skeletonization in a manner analogous to Lemma
\ref{lem:lem}, yielding larger constants.  Writing the recurrence relation (at the box level) for the reach in \hif{} as we did for \rskelf{}, we obtain
\begin{align*}
r_\ell = \left\lceil \frac{r_{\ell + 1} + 2}{2} \right\rceil + 1 = \left\lceil \frac{r_{\ell + 1}}{2} \right\rceil + 2,
\end{align*}
which has a fixed point at 4 (versus 2 for \rskelf{}). The same trick as before can be used to bound the size of the collection of marked boxes, giving
$\lvert\M_\ell\rvert \le (2r_\ell + 1)^d \le C'$.
\end{proof}

In Theorem \ref{th:hifupdate} we assume that the number of remaining
skeletons for a box at level $\ell$ grows like $\mathcal{O}(L-\ell)$.
This rate of growth is strongly supported by numerical experiments
(see \cite{hifie}) though remains a conjecture at this time. For both
\hif{} and \rskelf{}, it is interesting to observe that
when a constant number of leaf boxes are modified, the cost of an
update is asymptotically less expensive than an apply or solve, both of which have complexity $\mathcal{O}(N)$.

\section{Numerical results}
\label{sec:results}
 We now present two examples showing the asymptotic scaling of our updating routine, one for \rskelf{} and one for \hif{}.  For each example, the following, if applicable, are given:
 \begin{itemize}
  \item
   $\epsilon$: base relative precision of the interpolative decomposition;
  \item
   $N$: total number of DOFs in the problem;
  \item
   $t_{\text{f}}$: wall clock time for constructing the factorization in seconds;
   \item
   $t_{\text{u,p}}$: wall clock time for updating in response to modifying a constant proportion of points in the factorization;
  \item
   $t_{\text{u,n}}$: wall clock time for updating in response to modifying a constant number of points in the factorization.
 \end{itemize}
 All algorithms and examples were implemented in C++ using the Intel Math Kernel Library for BLAS/LAPACK routines, and all computations were performed using a single core of an Intel Xeon E5-4640 CPU at 2.4 GHz on a 64-bit Linux machine with 1.5 TB of RAM.  Previous work in \cite{domainsAd,rskel,hifie,martinsson-rokhlin} has shown that the accuracy of the approximate factorization is well-controlled by $\epsilon$, in that the \rskelf{} or \hif{} factorization $F$ of a matrix $G$ satisfies
 \begin{align}
\|G - F\|_2 \lesssim \epsilon \|G\|_2.
 \end{align} As such, we focus our discussion on the asymptotic runtime of factoring versus updating.

\subsection{Example 1: Laplace double-layer potential on a circle with a bump}
We first present an example of modifying the boundary geometry for a boundary integral equation formulation of the Laplace equation.  Consider the interior Dirichlet Laplace problem,
\begin{align}
\Delta u(x) &= 0, & &x\in\Omega\subset\mathbb{R}^2,\\
u(x) &=f(x), & &x\in\Gamma=\partial\Omega,
\end{align} which can be written as a second-kind integral equation with unknown surface density $\sigma(x)$ as
\begin{align}\label{eq:bielap}
-\frac{1}{2}\sigma(x) + \int_\Gamma \frac{\partial K}{\partial\nu_y}(\|x-y\|)\sigma(y)\,d\Gamma(y) &=f(x), & &x\in\Gamma,
\end{align}
where $K(r) = -\frac{1}{2\pi}\log r$ is the fundamental solution of the free-space partial differential equation and $\nu_y$ is the outward-facing unit normal at $y\in\Gamma$.

\begin{figure}
\centering
\begin{subfigure}{0.4\textwidth}
\includegraphics[width=0.9\textwidth]{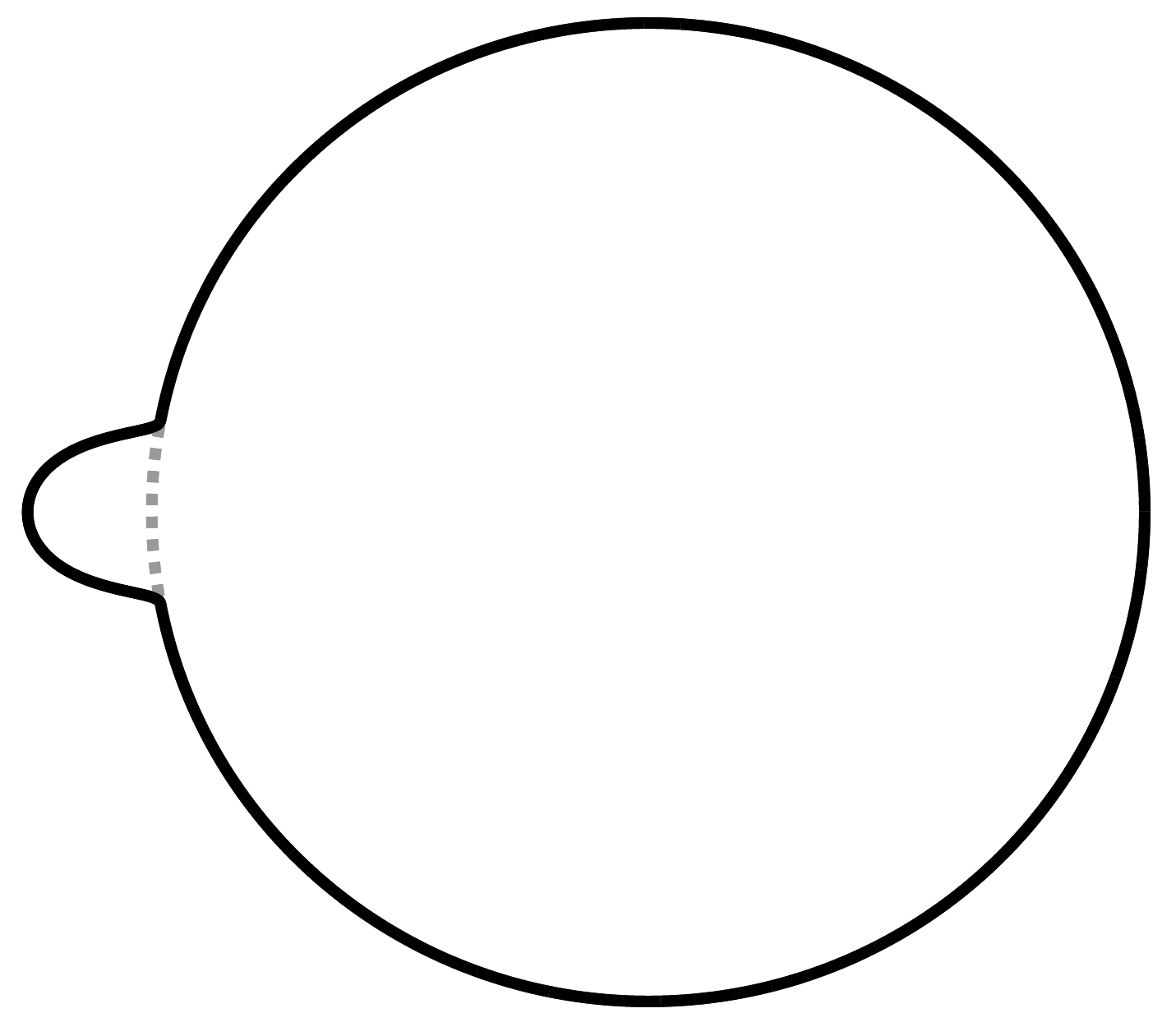}
\end{subfigure}
\qquad
\begin{subfigure}{0.4\textwidth}
\includegraphics[width=1.0\textwidth]{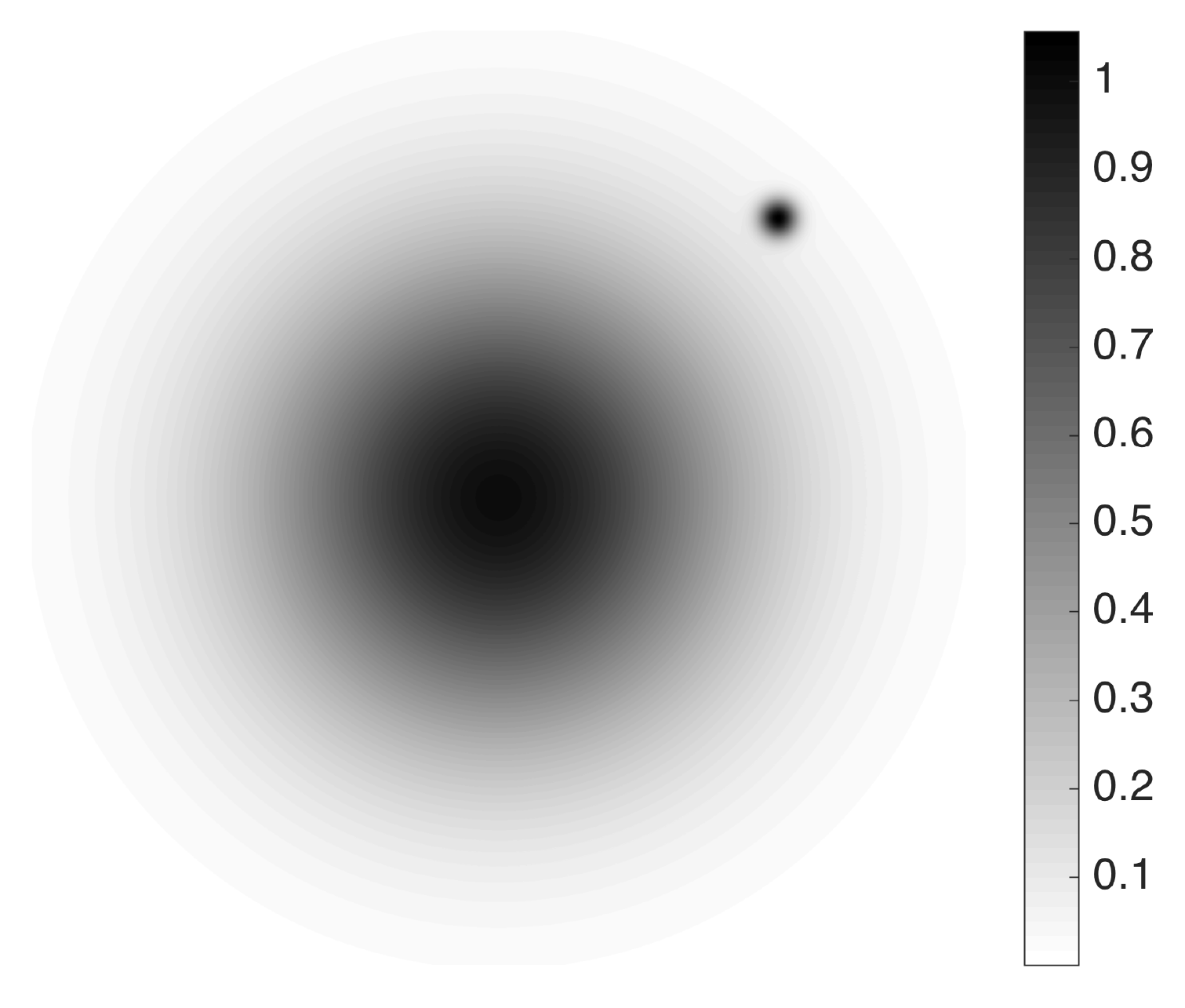}
\end{subfigure}
\caption{Left: Visualization of the boundary $\Gamma$ for Example 1.  For scaling tests with a fixed number of modified points, the size of the perturbation will vary depending on the size of $N$, while for those with a variable number of points the size of the perturbation will remain constant.  Right: Visualization of the perturbed scatterer function $w_1(x)$ for Example 2.  In our example, the size of the perturbing Gaussian will vary depending on the size of $N$. \label{fig:gauss}\vspace*{-.6cm}}
\end{figure}

We use the trapezoid rule to discretize \eqref{eq:bielap} on $\Gamma = \Gamma_1$, a circle with a bump function perturbation as in Figure \ref{fig:gauss}, whose radius is given by
\begin{align}
r(t) &= \left\{ \begin{array}{cl} 1 + 0.25\exp\left(\frac{-1}{1-[s(t)]^2}\right)& t\in (t_m,\,t_M),\\ 1&\text{else,} \end{array}\right.
\end{align}
with $s(t) = \frac{2t-(t_M + t_m)}{(t_M-t_m)}$,
and then factor the resulting system using $\rskelf{}$.  With this base factorization, we move the quadrature points where necessary such that they discretize $\Gamma_2,$ a simple circle (\ie, $r\equiv 1$), and change the necessary quadrature weights to reflect the new arc lengths.  We then use the old factorization as input to our updating algorithm to construct the factorization for the new geometry.

To investigate asymptotic scaling of the updating algorithm as we increase the number of discretization points $N$, there are two primary ways to increase the problem size.  The first is to choose $\Gamma_2$ to have a perturbed region that is independent of $N$, which implies that a fixed proportion of the discretization points will be modified.  The second is to use a variable-size perturbation region for $\Gamma_2$ such that the number of modified discretization points is constant.  In the first case, we expect to see linear scaling with $N$, since the number of modified leaf-level boxes is $\mathcal{O}(N)$, and in the second case theory dictates poly-logarithmic scaling.  For the first case, we will take $(t_m,\,t_M) = \left(\frac{9\pi}{10},\,\frac{11\pi}{10}\right),$ and for the second we take $(t_m,\,t_M) = \left(\pi-\frac{1000\pi}{N},\,\pi+\frac{1000\pi}{N}\right).$

The data for this example can be seen in Table \ref{tab:ex1}, with Figure \ref{fig:rskelresults} showing corresponding scaling results for both the case of updating a constant proportion of DOFs (approximately $N/10$) and a constant number of DOFs (approximately 1000).  The initial factorization time for both cases for fixed $N$ was approximately the same.

\begin{table}[h]
\caption{Timing results for Example 1 with \rskelf{} as we vary the ID tolerance $\epsilon$ and the total number of points $N$.  The time to construct the initial factorization is $t_\text{f}$.  We see that doubling the number of points doubles the time to update a constant proportion of points, $t_\text{u,p}$, but that the time to update in response to the modification of a constant number of points $t_\text{u,n}$ grows more slowly with $N$.\label{tab:ex1}}
\centering

\begin{tabular}{@{}ccccc@{}}
%\toprule
$\epsilon$ & $N$               & $t_{\text{f}} \,(s)$                       & $t_{\text{u,p}} \,(s)$                 & $t_{\text{u,n}} \,(s)$ \\ \midrule
\multirow{3}{*}{$10^{-3}$}     & 524288  &  9.4e$+$0         & 8.6e$-$1                    &  1.9e$-$2                   \\
                               & 1048576 &  1.9e$+$1           & 1.7e$+$0                    &  2.0e$-$2                   \\
                               & 2097152 &  3.8e$+$1           & 3.4e$+$0                    &  2.1e$-$2                   \\ \midrule
\multirow{3}{*}{$10^{-6}$}     & 524288  &  1.2e$+$1           & 1.1e$+$0                    &  3.0e$-$2                  \\
                               & 1048576 &  2.4e$+$1           & 2.1e$+$0                    &  3.1e$-$2                  \\
                               & 2097152 &  4.9e$+$1           & 4.2e$+$0                    &  3.2e$-$2                  \\ \midrule
\multirow{3}{*}{$10^{-9}$}     & 2097152 &  1.6e$+$1          & 1.5e$+$0                    &  5.0e$-$2                  \\
                               & 1048576 &  3.3e$+$1          & 2.8e$+$0                    &  5.3e$-$2                  \\
                               & 2097152 &  6.5e$+$1           & 5.6e$+$0                    &  5.6e$-$2 \\ \midrule
\end{tabular}
\end{table}

\begin{figure}
\centering
\begin{subfigure}{0.4\textwidth}
\centering
\hspace{-1.5cm}
\includegraphics[width=\textwidth]{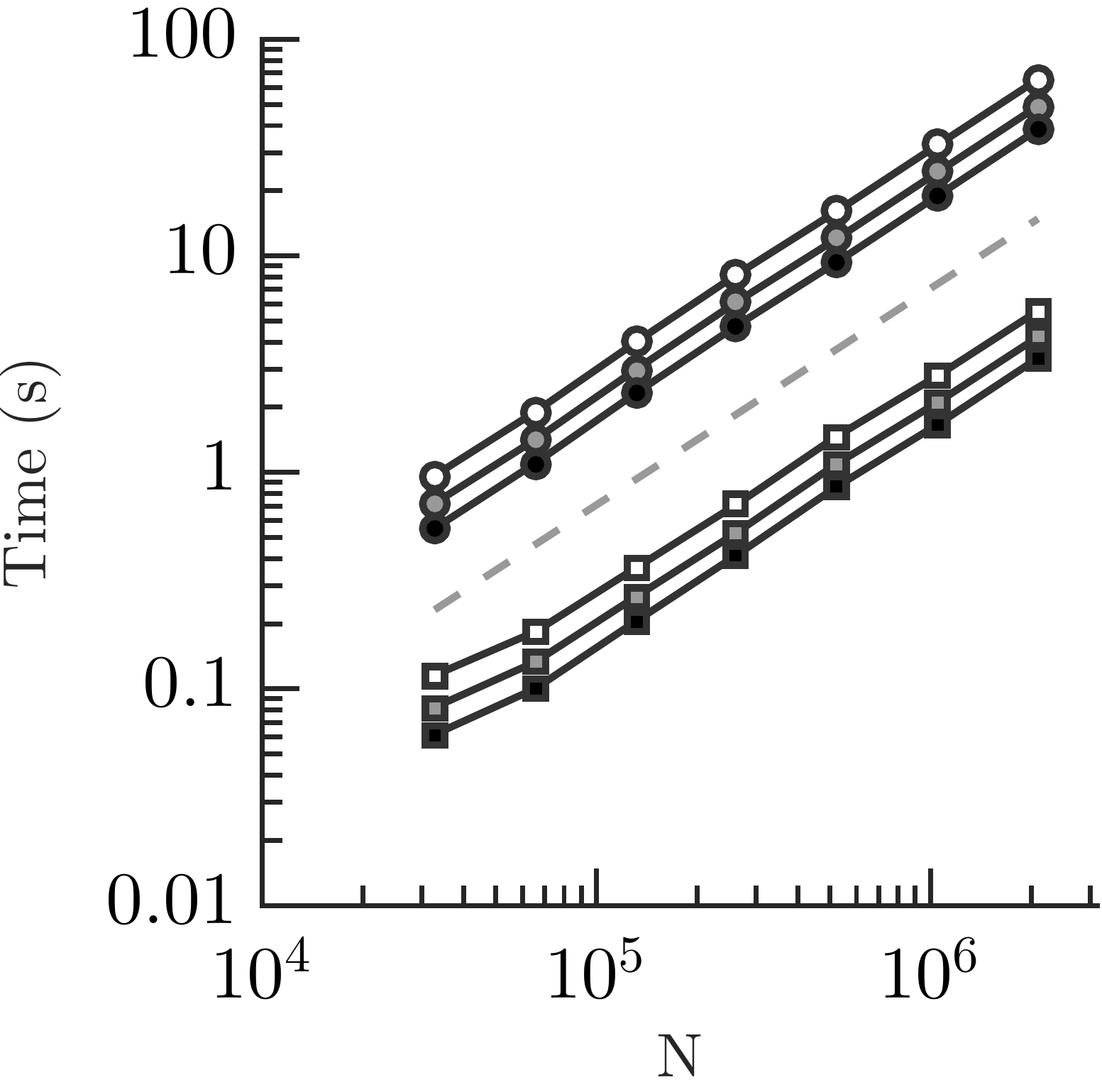}
\end{subfigure}
\quad
\begin{subfigure}{0.4\textwidth}
\centering
\includegraphics[width=\textwidth]{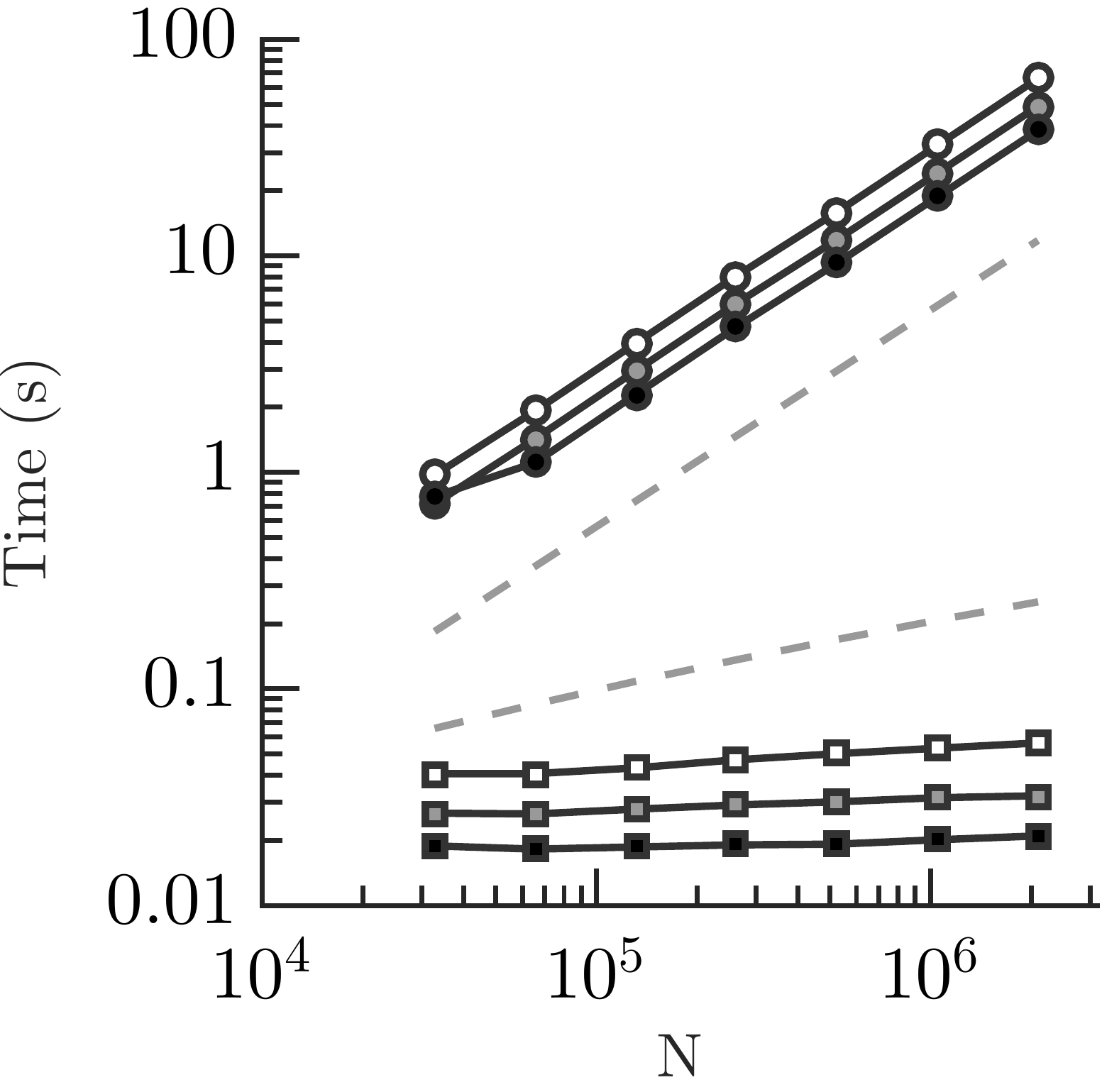}
\end{subfigure}
\caption{Timing results for Example 1 with \rskelf{} on the perturbed circle.  Circular markers denote factor times and square markers denote update times for tolerances $\epsilon$ of $10^{-3}$ (black), $10^{-6}$ (gray), and $10^{-9}$ (white).  Left: updating a fixed proportion of points with guide curve $\mathcal{O}(N)$.  Right: updating a constant number of points with guide curves $\mathcal{O}(N)$, and $\mathcal{O}\left(\log^4 \hspace{-0.06cm}N\right)$, from top to bottom. \label{fig:rskelresults} \vspace*{-.6cm}}
\end{figure}

\subsection{Example 2: the Lippmann-Schwinger equation}
To demonstrate updating of \hif{} for the true 2D case, we consider the Lippmann-Schwinger equation for Helmholtz scattering of an incoming wave with frequency $k$,
\begin{align}
\sigma(x) + k^2 \int_\Omega K(\|x-y\|)w(y)\sigma(y)\,d\Omega(y) &= f(x), &x\in\Omega=(0,1)^2.
\end{align}
Here, $K(r) = (i/4)H_0^{(1)}(kr)$ is the fundamendal solution of the Helmholtz equation written in terms of the zeroth order Hankel function of the first kind, $H_0^{(1)}(x)$, and $w(x)$ is a function representing the scatterer.  Although this kernel is derived from an elliptic partial differential equation, it is oscillatory with the frequency of oscillation and thus relative smoothness of the kernel dependent on $k$.
Assuming that $w(x)$ is non-negative, we can make the change of variables $u(x) = \sqrt{w(x)}\sigma(x)$ to obtain the symmetric form
\begin{align}\label{eq:ls}
u(x) +k\sqrt{w(x)}\int_\Omega K(\|x-y\|)\left[ k\sqrt{w(y)}\right]u(y)\,d\Omega(y) &= \sqrt{w(x)}f(x),\, x\in\Omega,
\end{align}
which affords a speedup of about a factor of two.

Given $w(x)$, we discretize \eqref{eq:ls} using a uniform $\sqrt{N}\times\sqrt{N}$ grid, where the diagonal entries $A_{ii}$ of the matrix $A$ are computed adaptively and the off-diagonal entries $A_{ij}$ are approximated using one-point quadratures.  For this example, we will consider starting with the function
\begin{align}
w_0(x) &= \exp(-16\|x-c\|^2),
\end{align}
a Gaussian centered at $c=\left[0.5,\,0.5\right]^T,$ and then modifying the scatterer by adding a perturbation that is essentially localized to construct
\begin{align}
w_1(x) &= w_0(x) + \exp(-s\|x-d\|^2),
\end{align}
where the perturbation is a Gaussian centered at $d=[0.8,0.8]^T$ truncated to machine precision and $s = s(N)$ is an adaptive scale parameter.  In particular, we choose $s(N)$ such that roughly 340 points lay within the region where the perturbation is greater than machine precision, which isolates the perturbation to a number of leaf-level boxes of the quadtree that is independent of $N$.  An example perturbed scatterer can be seen in Figure \ref{fig:gauss}.  The box in each test has sides of unit length, and we choose the frequency as $k=2\pi\kappa$ for wave numbers $\kappa = 0.1,$ $\kappa=1,$ and $\kappa=10$.  The data for this example can be seen in Table \ref{tab:ex2}, with the corresponding scaling plot in Figure \ref{fig:hifresults}.  We see that, for larger $\kappa$, correspondingly larger $N$ is required to reach the asymptotic regime.

\begin{figure}
\centering
\begin{subfigure}{0.3\textwidth}
\includegraphics[width=1.0\textwidth]{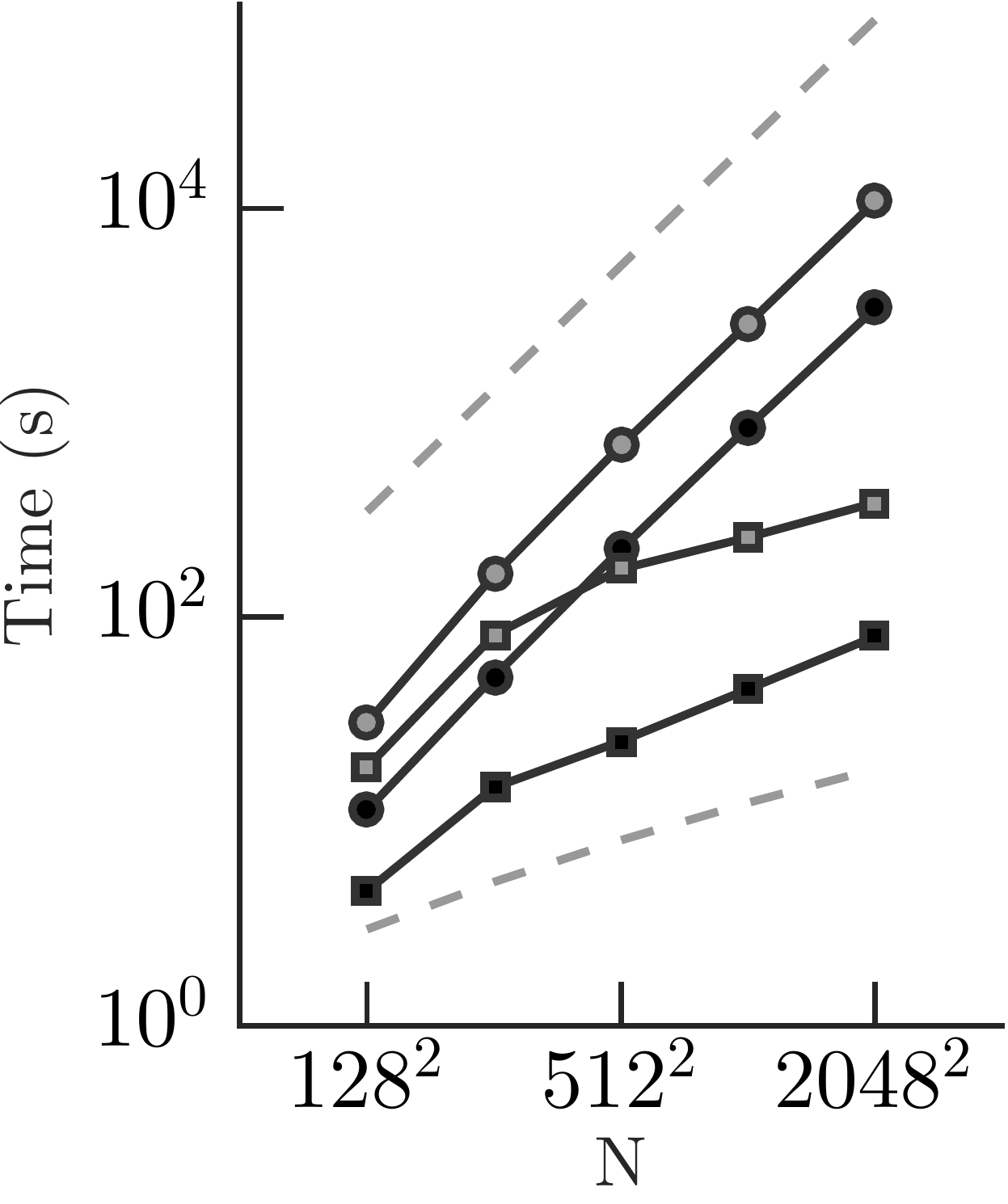}
\end{subfigure}
\;
\begin{subfigure}{0.3\textwidth}
\includegraphics[width=1.0\textwidth]{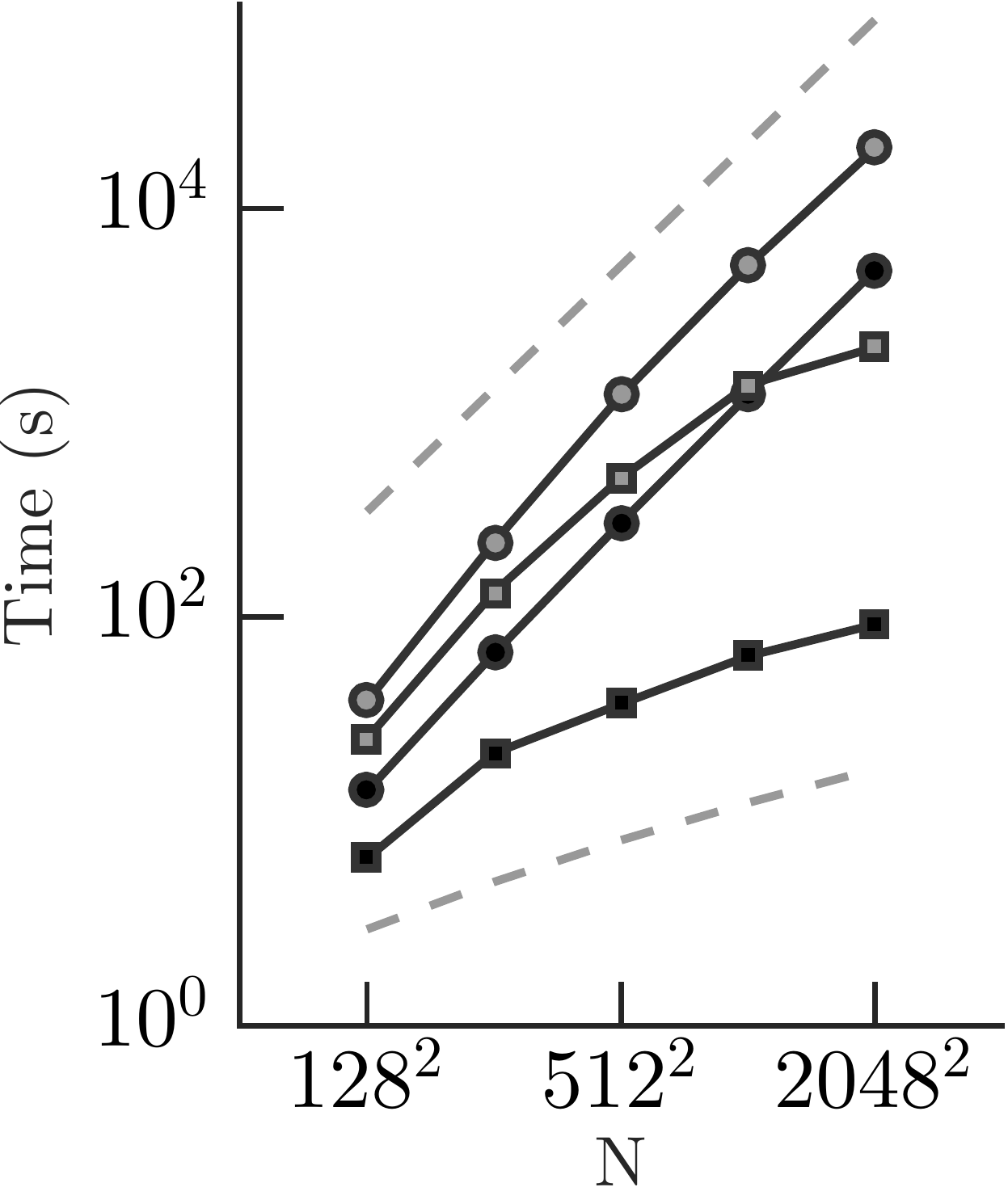}
\end{subfigure}
\;
\begin{subfigure}{0.3\textwidth}
\includegraphics[width=1.0\textwidth]{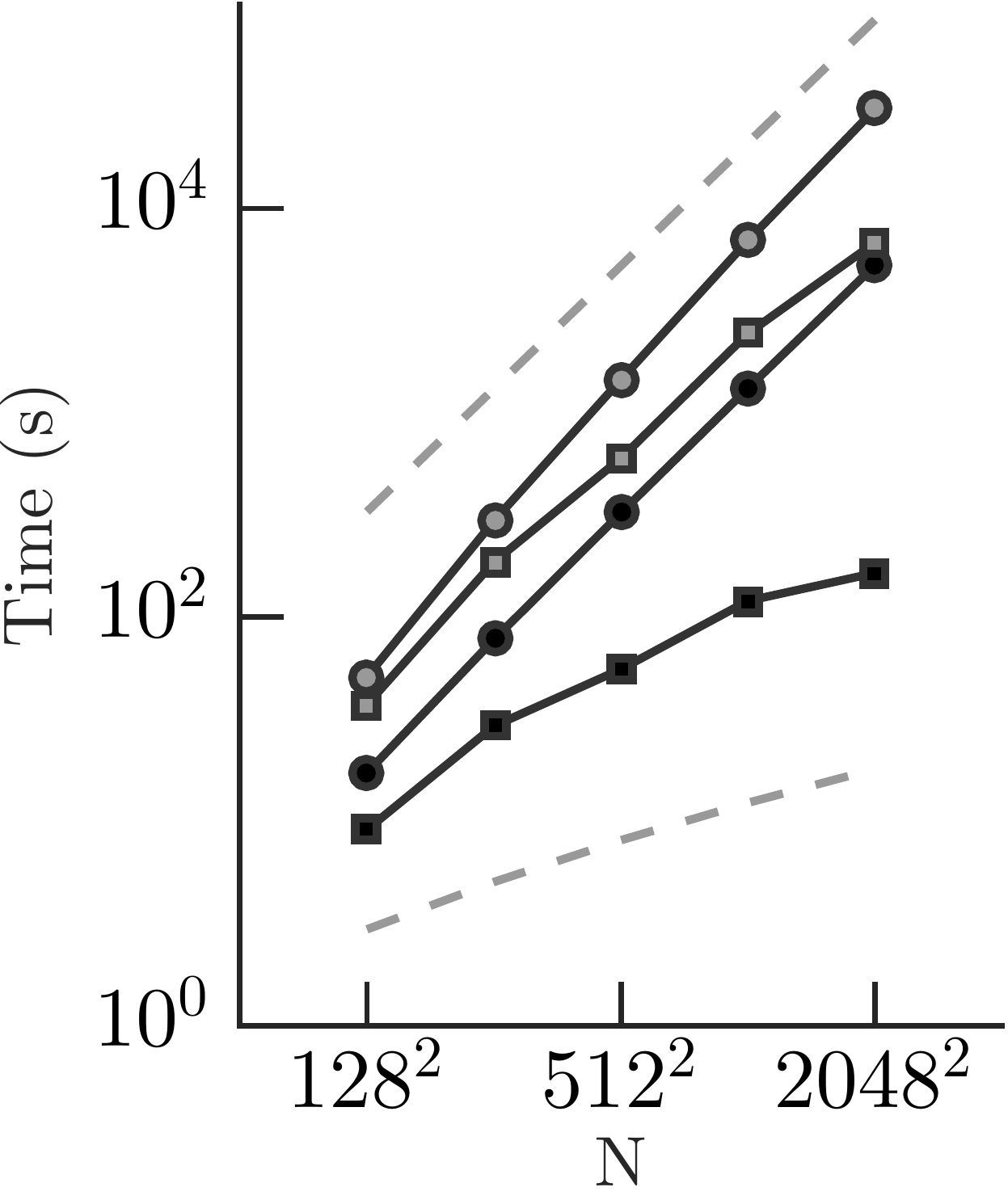}
\end{subfigure}
\caption{Timing results for Example 2 with \hif{} on the Lippmann-Schwinger example, updating a constant number of points for wave number $\kappa = 0.1$ (left), $\kappa=1$ (center), and $\kappa=10$ (right).  Circular markers denote factor times and square markers denote update times for tolerances $\epsilon$ of $10^{-3}$ (black) and $10^{-6}$ (gray).  The top guide line is $\mathcal{O}(N)$ and the bottom is $\mathcal{O}\left(\log^4 \hspace{-0.06cm}N\right)$.\label{fig:hifresults} \vspace*{-.6cm}}
\end{figure}

\begin{table}
\centering
\caption{Timing results for Example 2: Lippmann-Schwinger with \hif{} as we vary the ID tolerance $\epsilon$ and the total number of points $N$.  Note the slow growth with respect to $N$ of of the time to update in response to modifying a constant number of points, $t_{\text{u,n}}$, when the wave number $\kappa$ is small.\label{tab:ex2}}
\begin{tabular}{@{}cccccccc@{}}
%\toprule
&&\multicolumn{2}{c}{$\kappa=0.1$}&\multicolumn{2}{c}{$\kappa=1$}&\multicolumn{2}{c}{$\kappa=10$}
\\\cmidrule(lr){3-4}
\cmidrule(lr){5-6}
\cmidrule(lr){7-8}

$\epsilon$ & $N$               & $t_{\text{f}} \,(s)$                         & $t_{\text{u,n}}\, (s)$ & $t_{\text{f}} \,(s)$                         & $t_{\text{u,n}}\, (s)$ & $t_{\text{f}} \,(s)$                         & $t_{\text{u,n}}\, (s)$\\ \midrule
\multirow{3}{*}{$10^{-3}$}     & $512^2$  &  2.1e$+$2                            & 2.5e$+$1  & 2.9e$+$2& 3.8e$+$1  & 3.2e$+$2 & 5.6e$+$1                  \\
                               & $1024^2$ &  8.5e$+$2                            & 4.4e$+$1  & 1.2e$+$3& 6.4e$+$1  & 1.3e$+$3& 1.2e$+$2                \\
                               & $2048^2$ &  3.2e$+$3                            & 8.0e$+$1  & 5.0e$+$3& 9.2e$+$1   & 5.3e$+$3&1.6e$+$2               \\
                               \midrule
\multirow{3}{*}{$10^{-6}$}     & $512^2$  &  6.9e$+$2                            & 1.7e$+$2  & 1.2e$+$3& 4.8e$+$2   & 1.5e$+$3& 6.0e$+$2              \\
                               & $1024^2$ &  2.7e$+$3                            & 2.5e$+$2  & 5.2e$+$3& 1.4e$+$3   & 7.0e$+$3& 2.4e$+$3              \\
                               & $2048^2$ &  1.1e$+$4                            & 3.6e$+$2  & 2.0e$+$4& 2.1e$+$3   &3.1e$+$4& 6.8e$+$3              \\
                                \midrule

\end{tabular}
\end{table}

\begin{figure}
\centering

\end{figure}

\section{Conclusions}
\label{sec:conclusions}
Our examples indicate that the updating algorithm behaves as expected given our theoretical results, with linear scaling in the total number of leaf-boxes containing DOFs that have been directly modified and poly-log scaling in the total number of DOFs. This is a result that is perhaps not surprising theoretically, but should prove to be of great utility for real-world implementations of these algorithms.

In contrast to the Sherman-Morrison-Woodbury (SMW) strategy for solving perturbed systems in \cite{gg}, the end result of our algorithm is the factorization corresponding to the new system. Furthermore, this process is exact, \ie, it results in exactly the same approximate factorization as if a new one had been computed from scratch. One advantage of this strategy is that it allows for subsequent updates to a new set of localized DOFs, possibly located in a different region of the domain.

When a constant number $m$ of points are modified, our updating strategy has asymptotic cost $\mathcal{O}(m\log^4 N)$ to update and $\mathcal{O}(N)$ for the subsequent solve -- that is to say, updating is (asymptotically) essentially free if one is interested in using the updated factorization to solve a system.  Compared to the $\mathcal{O}(mN)$ cost of updating with the SMW strategy, we can obtain a significantly better asymptotic complexity considering that the small perturbations made in Example $2$ lead to $m$ on the order of 340.

For the recursive skeletonization factorization, our updating process is not difficult to implement.  Our examples show that in cases where we update even large portions of the domain it is possible to recover the constant factor complexity difference between updating and complete refactorization.  For the hierarchical interpolative factorization, we saw that updating requires more bookkeeping than for \rskelf{} due to the diagonal updates at the edge levels.  However, the updating algorithm still shows the same asymptotic scaling.

While here we have only discussed updating for 2D integral equations, all the ideas presented in this paper extend directly to the 3D case.  In fact, it is for 3D problems that we expect to see the biggest performance gain from using an updating procedure instead of completely refactoring the system.  The reason for this stems again from simple box counting -- the number of white nodes in the 3D analogue of Figure \ref{fig:hi} grows more quickly with respect to the depth of the tree.

Additionally, just as \hif{} can be carried out in the partial differential equation case as described in \cite{hifde}, so too can the updating procedure described here.  The ideas of box-marking and keeping track of diagonal interactions extend directly, but now the DOFs in the linear system come from, \eg, a finite difference discretization.  This is current work that we will present in a future publication.

\section*{Acknowledgments}
V.M. is supported by a U.S. Department of Energy Computational Science
Graduate Fellowship under grant number DE-FG02-97ER25308. A.D. is
partially supported by a National Science Foundation Graduate Research
Fellowship under grant number DGE-1147470 and a Simons Graduate
Research Assistantship.  K.H. is supported by a National Science
Foundation Mathematical Sciences Postdoctoral Research Fellowship
under grant number DMS-1203554.  L.Y. is partially supported by the
National Science Foundation under award DMS-1328230 and the
U.S. Department of Energy's Advanced Scientific Computing Research
program under award DE-FC02-13ER26134/DE-SC0009409.  The authors thank
L. Ryzhik for computing resources, as well as A. Benson,
B. Nelson, N. Skochdopole, and the anonymous reviewers for useful comments on drafts of this manuscript.

\bibliographystyle{siam}
\bibliography{M102450}
\label{LastPage}
\end{document}